\numberwithin{equation}{section}
\theoremstyle{plain}
\newtheorem{theorem}{Theorem}[section]
\newtheorem{lemma}[theorem]{Lemma}
\newtheorem{corollary}[theorem]{Corollary}
\theoremstyle{definition}
\newtheorem{definition}{Definition}[section]
\newtheorem{example}{Example}[section]
\newenvironment{customthm}[1]
  {\innercustomthm}
  {\endinnercustomthm}
\newcommand{\Hom}{{\rm{Hom}}}
\newcommand{\Ext}{{\rm{Ext}}}
\newcommand{\Tor}{{\rm{Tor}}}
\newcommand{\Ker}{{\rm{Ker}}}
\newcommand{\Aut}{{\rm{Aut}}}
\newcommand{\Q}{\mathbb{Q}}
\newcommand{\Z}{\mathbb{Z}}
\newcommand{\power}{\mathcal{P}}
\DeclareMathOperator*{\colim}{colim}
\DeclareMathOperator*{\aut}{Aut}
\newcommand{\og}{\mathscr{O}_G}
\newcommand{\cg}{\mathscr{C}_G}
\begin{document}

\title{Bredon Cohomology of Polyhedral Products}
\author[Q. Zhu]{Qiaofeng Zhu}
\address{University of Rochester \\ Department of Mathematics \\
Rochester NY 14627} 
\email{q.zhu@rochester.edu}

\begin{abstract} 
A polyhedral product is a natural subspace of a Cartesian product, which is
specified by a simplicial complex $K$. The automorphism group $\Aut(K)$ of $K$ induces a group action on the polyhedral product. In this paper we study this group action and give a formula for the fixed point set of the polyhedral product for any subgroup $H$ of $\Aut(K)$. We use the fixed point data to compute examples of Bredon cohomolohgy for small non-Abelian groups such as $D_8$ and $\Sigma_4$.
 
\end{abstract}

\maketitle

\setcounter{tocdepth}{2}
\tableofcontents
\section{Introduction}
The category of Bredon coefficient systems over a commutative ring $R$, denote by $\cg^R$, is the category of contravariant functors from the canonical orbit category of $G$ to the category of $R$-modules. It is an Abelian category with enough injectives. Using this category $\cg$, Bredon  defined a homology and a cohomology theory for $G$-spaces \cite{Bredon}. On the other hand, subspaces of products have been studied recently which give a natural setting for Bredon cohomology. One of the purposes of this paper is to calculate Bredon cohomology of those subspaces for small non-abelian group $G$ such as $D_8$, the dihedral group of order 8, and $\Sigma_4$, the symmetric group of order 24. 

Through this paper we will assume that $G$ is a finite group. Given a $G$-space $X$, in order to compute Bredon cohomology, we need to access the homology of the fixed points sets $X^H$ for all subgroups $H$ of $G$ as well as an injective resolution for a Bredon coefficient system. Unfortunately, both parts are not easy to obtain in general. Much recent researches, such as \cite{BSW17} and \cite{Zeng17}, focus on $G=C_{p^n}$, the cyclic group of prime power order, and on a more rigid subcategory of $\cg$ such as category of Mackey functors. 

On the other hand, given a simplicial complex $K$ on $m$ vertices, the group $\Aut(K)$ is a subgroup of $\Sigma_m$. It naturally acts on the polyhedral product $Z(K,(X,A))$ by permuting coordinates. Polyhedral products and this group action are defined in section 2. In that section, we also defined the strong quotient of a simplicial complex $K$ with respect to a subgroup $G<\Aut(K)$, denoted by $K//G$, which is another simplicial complex with vertex set $V_G$. This construction provides a formula for the fixed point set of $G$. One of the main results of this paper is the following fixed point set theorem. Let $O_G$ be the set of orbits of vertices of $K$ under the group action $G$. And denote the number of vertex obrits that do not contained in $K$ by $k_G=|O_G-K|$. 

\begin{customthm}{\ref{fixed point theorem}}
Let $K$ be an abstract simplicial complex with vertex set $[m]$ and $(X, A)$ a based CW-pair. For any subgroup $G< \aut(K)$, $G$ acts on $Z(K; (X,A))$. T. Then 
$$Z(K; (X, A))^G=Z(K//G; (X,A))\times A^{k_G}.$$
\end{customthm}

Meanwhile, Define the Weyl group $WH$ for any subgroup $H$ of $G$ by $$WH=N_G(H)/H.$$
For any $WH$-module $V_H$over $\Q$, Doman in \cite{Doman88}, construct an injective Bredon coefficient system $\underline{I(V_H)}$. In addition, he provide an injective envelope for any Bredon coefficent system over $\Q$ using those injective coefficient system.

\begin{customthm}{\ref{injective envelope}}[\cite{Doman88}]
$f:\underline{M}\rightarrow\oplus \underline{I(V_H)}$ is a injective envelope of $\underline{M}$, where direct sum is over all conjugate classes of $G$.
\end{customthm}

Details of this theorem are explained in Section 4. With the help of this theorem, given any Bredon coefficient system over $\Q$, there is a general method to write down its injective resolution. 

One of the tools for computing Bredon cohomology is the universal coefficient spectral sequences.

\begin{customthm}{\ref{universal coefficient spectral sequence}}[\cite{Bredon}, \cite{May96}]
There is a universal coefficient spectral sequence that converges to Bredon cohomology 
$$E_2^{p,q}=\Ext_{\cg^R}^{p,q}(\underline{H_*}(X), \underline{M})\Rightarrow H_G^n(X, \underline{M}),$$
and a universal coefficient spectral sequence that converges to Bredon homology 
$$E^2_{p,q}=\Tor^{\cg^R}_{p,q}(\underline{H_*}(X), \underline{N})\Rightarrow H^G_n(X, \underline{M}).$$
\end{customthm}

Combing all the results above, it is possible to compute Bredon cohomology of polyhedral products over rationals using universal coefficient spectral sequences, even for a non-Abelian group. In section 4, we will see examples of such computations for $D_8$ and $\Sigma_4$.

In particular, for the constant $\underline{\Q}$ coefficient system, we have the following result for Bredon cohomology

\begin{customthm}{\ref{BCforconstantQ}}
Let $K$ be the boundary of 3-simplex,  the polyhedral product $Z(K;(D^1, S^0))$, homeomorphic to 3-sphere $S^3$, admits a $\Sigma_4$-action. $$H=\langle (1234),(12)(34)\rangle$$ is one of the Sylow 2-subgroups of $\Sigma_4$. It is also isomorphic to $D_8$, the dihedral group of order 8. For $G=\Sigma_4$ or $D_8$, the Bredon cohomology with coefficient $\underline{\Q}$ is given by
\[
H^n_{G}(Z(K;(D^1, S^0)),\underline{\Q})=\left\{
\begin{array}{ll}
\Q^2 & \text{for } n=0;\\
0 & \text{for } n=1,2;\\
\Q & \text{for } n=3.
\end{array}\right.
\]
\end{customthm}

\subsubsection*{\bf Acknowledgements}
The author thanks Doug Ravenel and Mingcong Zeng for inspiring my interest in equivariant homotopy theory; and Fred Cohen for many delightful and helpful conversations over the years.
\section{Group actions on polyhedral products}

\begin{definition}

\begin{enumerate}[label=\roman*)]
\item 
Let $K$ denote an abstract simplicial complex with $m+1$ vertices labeled by 	the set $[m]=\{0,1,2,3,\cdots, m\}$. Thus, $K$ is a subset of the power set $\power([m])$ of $[m]$ such that for each element $\sigma\in K$, we have $\power(\sigma)\subset K$. 
\item
In particular, let $\Delta^m=\power([m])$ denote the abstract simplicial complex given by the full power set. We call it an $m$-simplex.
\item
For any element $\sigma\in K$, elements in $\sigma$ are called vertices of $\sigma$. 
\item
$\sigma\in K$ is called a $k$-simplex of $K$ if $\sigma$ contains $k+1$ vertices, i.e., $|\sigma|=k+1$.
\item
Given any subset $I\subset [m]$, let $K_I$ be the full subcomplex of $K$ consisting of all simplices of $K$ which have all of their vertices in $I$, that is, $K_I=\{\sigma\cap I: \sigma\in K\}$. 
\end{enumerate}

\end{definition}
Let $\mathcal{K}$ be the category of abstract simplicial complexes whose morphisms are monomorphisms of simplicial complexes. and $\mathcal{CW}_*$ are the category of based CW-complexes.
\begin{definition}
\label{Polyhedral product}
Given any based CW-pair $(X,A)$, The polyhedral product functor of $(X,A)$ 
$$Z(-; (X, A)):\mathcal{K}\rightarrow \mathcal{CW}_*$$ is defined as follows:
\begin{enumerate}[label=\roman*)]
\item
Given an abstract simplical complex $K$ with vertex set $[m]$, for any $\sigma\in K$, let $$Z(\sigma; (X, A))=\prod_{i=0}^m Y_i, \text{ where } Y_i=\left\{
\begin{array}{ccl}
X, &\text{if}& i\in \sigma;\\
A, &\text{if} &i\in [m]-\sigma.
\end{array}\right.$$
with $Z(\emptyset; (X, A))=A^{m+1}$.

\item
The polyhedral product with respect to $K$ is 
$$Z(K; (X, A))=\bigcup_{\sigma\in K}Z(\sigma; (X, A))=\colim_{\sigma\in K}Z(\sigma; (X, A)).$$
\end{enumerate}
\end{definition}

\begin{example}
Let $K=\Delta^m$, then $$Z(K; (X, A))=X^{m+1}$$ is the cartesian product of $X$ and $$Z(\partial K; (D^1, S^0))=S^{m},\ \  Z(\partial K; (D^2, S^1))=S^{2m+1}.$$ are both spheres.

More generally, for a manifold $X$ with boundary $A$. The polyhedral product is $Z(\partial K; (X, A))=\partial X^{m+1}$.
\end{example}

\begin{example}{\cite{Coxeter38}}
Let $K_n$ be the boundary of $n$-gon for $n>3$, then $Z(K_n; (D^1, S^0))$ is a Riemann surface of genus $1+(n-4)2^{n-3}$.
\end{example}

Details of the above two examples could be found in \cite{BuchstaberPanov}. The combinatorics property of polyhedral product is closely related to the underlying simplicial complex. For instance, if there is a group $G$ acting on the underlying abstract simplicial complex $K$, then it naturally induces a $G$-action on $Z(K; (X,A))$.

\begin{definition}
Let $K$ be an abstract simplicial complex with vertex set $[m-1]$. Let $G$ be a subgroup of $\Sigma_m$, the symmetric group of $m$ letters. $G$ acts on $K$ if: 

\begin{enumerate}[label=\roman*)]
\item
$G$, as a subgroup of $\Sigma_m$, permutes the vertex set $[m-1]$;
\item
The action on vertices induces the action on the power set $\power([m-1])$;
\item
For any $\sigma\in K$ and any $g\in G$, we have $g\cdot\sigma\in K$, in other words, $K$ is also a $G$-set.
\end{enumerate}
The maximal subgroup of $\Sigma_m$ that acts on $K$ is denoted by $\aut(K)$. 

\end{definition}

\begin{example}
Here are some examples of group actions on simplicial complexes.
\begin{enumerate}
\item
If $K=\Delta^m$ or $\partial \Delta^{m}$, then $\aut(K)=\Sigma_{m+1}$.
\item
If $K_n$ is the boundary of $n$-gon, then $\aut(K)=D_{2n}$, the dihedral group of order $2n$.
\item
If $K$ is the Dynkin diagram $E_7$ or $E_8$, then $\aut(K)=\{e\}$, is the trivial group.
\end{enumerate}
\end{example}

For a topological group $G$, a $G$-space is topological space with a $G$-action and a $G$-map is a continuous map between two $G$-spaces that is equivariant under the $G$-actions.

\begin{definition}
A $G$-CW-complex $X$ with a fixed basepoint is the union of $G$-spaces $X^n$ such that $X^0$ is a disjoint union of orbits $G/H$ that contains the base point, and $X^{n+1}$ is obtained from $X^n$ by attaching $G$-cells $G/H\times D^{n+1}$ along attaching $G$-maps $G/H \times S^n\rightarrow X^n$.
\end{definition}

For any abstract simplicial complex $K$ with vertex set $[m]$, and a subgroup $G <\aut(K)$, $G$ is also a subgroup of $\Sigma_{m+1}$. The polyhedral product $Z(K;(X,A))$ is a subspace of $X^{m+1}$. $G$ acts on $X^{m+1}$ by permuting coordinates. And since $G$ acts on the $K$. We could restrict the $G$-action on $X^{m+1}$ to a well-defined $G$-action on $Z(K;(X,A))$. Moreover, if $\mathcal{K}^G$ is the category of abstract simplicial complexes with $G$-actions, and $\mathcal{CW}_*^G$ is the category of based $G$-CW-complexes and based $G$-maps, then for any given based $CW$-pair $(X,A)$, $Z(-:(X, A))$ is a functor:
$$Z(-:(X, A)):\mathcal{K}^G\rightarrow \mathcal{CW}_*^G$$

The $G$-action on $Z(K; (X,A))$ is not a free $G$-action. Let $\Delta(A)$ be the image of $A$ under the diagonal map 
$$\Delta: X\rightarrow X^{m+1},$$
and $Z(K; (X,A))^G$ be the fixed point set under $G$-action. We have the following lemma:

\begin{lemma}
Let $K$ be an abstract simplicial complex with vertex set $[m]$ and $(X, A)$ a based CW-pair and $G< \aut(K)$. The subgroup $G$ acts on $Z(K; (X,A))$. Then $$\Delta(A)\subset Z(K; (X,A))^G.$$
\end{lemma}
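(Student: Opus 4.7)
The plan is to verify two things in sequence: first, that $\Delta(A)$ is actually contained in the polyhedral product $Z(K;(X,A))$ at all (a containment question in $X^{m+1}$), and second, that every point of $\Delta(A)$ is fixed under the coordinate-permutation action of $G$.

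For the first step, I would use the inclusion $A \subset X$ to observe that for every simplex $\sigma \in K$, the product $A^{m+1}$ sits inside $Z(\sigma;(X,A)) = \prod_{i=0}^m Y_i$, because each factor $Y_i$ is either $X$ or $A$ and in both cases contains $A$. Hence $A^{m+1} \subset Z(K;(X,A))$. Since $\Delta(A) = \{(a,\dots,a) : a \in A\} \subset A^{m+1}$, this gives $\Delta(A) \subset Z(K;(X,A))$. (If $K$ happens to be empty as a set of simplices, the claim is vacuous or else covered by the $Z(\emptyset;(X,A)) = A^{m+1}$ convention.)

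For the second step, any $g \in G < \Sigma_{m+1}$ acts on $X^{m+1}$ by permuting coordinates: $g \cdot (x_0,\dots,x_m) = (x_{g^{-1}(0)},\dots,x_{g^{-1}(m)})$. When all the coordinates agree, as is the case for a point $(a,a,\dots,a) \in \Delta(A)$, every such permutation fixes the point. Therefore every element of $\Delta(A)$ lies in $Z(K;(X,A))^G$, completing the inclusion.

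There is no genuine obstacle here; the statement is essentially bookkeeping, and the only thing to be careful about is to justify that $\Delta(A)$ actually lies inside the polyhedral product before talking about the $G$-action on it. The lemma serves as motivation for Theorem~\ref{fixed point theorem}, where the fixed point set is computed precisely and this diagonal copy of $A$ reappears as (part of) the factor $A^{k_G}$ coming from vertex orbits not contained in $K$.
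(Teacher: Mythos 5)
Your proof is correct and follows essentially the same approach as the paper, whose entire argument is the single sentence that the diagonal is fixed under permuting coordinates. You are somewhat more careful in that you first verify $\Delta(A) \subset Z(K;(X,A))$ before discussing the $G$-action, a preliminary step the paper treats as implicit.
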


\begin{proof}
The diagonal $\Delta(A)$ is always fixed under permuting coordinates. 
\end{proof}

The group action on polyhedral product $Z(K; (X,A))$ we just defined is not the only group action on it. For example, if $(X, A)$ is a $H$-CW-pair for a group $H$, then $Z(K; (X,A))$ admits  an $H\wr G$-action where $H\wr G$ is the wreath product of the group $H$ with the group $G$. We will discuss other group actions on polyhedral products in later papers.

In the computation of the Bredon cohomology of $Z(K;(X,A))$ with $G$-actions, one crucial information is fixed point set $Z(K; (X,A))^H$ for every subgroup $H<G$. In general, it might be too difficult to give a complete description of those fixed point sets for an arbitrary $G$-space. However, for polyhedral product, we have such a description. 

\begin{definition}
\label{strong quotient}
Let $K$ be an abstract simplicial complex and $G<\aut(K)$ with vertex set $[m]$. The strong quotient of K with respect to $G$, denoted by $K//G$, is another simplicial complex defined as follows:

\begin{enumerate}[label=\roman*)]
\item
Let $O_G$ be the set of orbits of vertices of $K$ under the group action $G$. And denote the number of vertex obrits that do not contained in $K$ by $k_G=|O_G-K|$. 
\item
Let $V_G$ be the set of orbits contained in $K$, i.e. $V_G=O_G\cap K$.
\item
The map $q: [m]\rightarrow V$ is the natural projection.
\item
Define $\widehat{q}:K\rightarrow \power(O_G)$ by following:

For any $\sigma\in K$,
\begin{enumerate}[label=\alph*)]
\item
If there exists some $v\in O_G$ such that  $v\cap \sigma\not=\emptyset$ and $v\not\subset \sigma$, then $\widehat{q}(\sigma)=\emptyset$;
\item
Otherwise, $\sigma$ is a union of several orbits, $\widehat{q}(\sigma)=\{v\in O_G: v\subset o\}$ is the set of all orbits contained in $\sigma$.
\end{enumerate}

\item
Let $S$ be the set of orbits of $K$ under $G$-action. An element in $S$ is donoted by $\overline{\sigma}$ where $\sigma\in K$ is the orbit representative.
\item
The image $\widehat{q}(\overline{\sigma}):= \widehat{q}(\sigma)$ is a subset of $V_G$. It is well-defined since $G$ acts on $\aut (K)$.
\item
Define $K//G:=\{\widehat{q}(\overline{\sigma}):\overline{\sigma}\in S, \text{and } \widehat{q}(\overline{\sigma})\subset V_G\}$ is the abstract simplicial complex with vertex set $V_G$.
\end{enumerate}
\end{definition}

\begin{example}
\label{simplex quotient}
Let $K=\Delta^m$, then $\aut(K)=\Sigma_{m+1}$. For any $G<\aut(K)$, $$K//G=\Delta^k\text{, for some } k\leq m.$$
\end{example}

The formula for the fixed point set using the strong quotient is given next.
\begin{theorem}
\label{fixed point theorem}
Let $K$ be an abstract simplicial complex with vertex set $[m]$ and $(X, A)$ a based CW-pair. For any subgroup $G< \aut(K)$, $G$ acts on $Z(K; (X,A))$. Then 
$$Z(K; (X, A))^G=Z(K//G; (X,A))\times A^{k_G}.$$
\end{theorem}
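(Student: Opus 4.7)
The plan is to exhibit an explicit homeomorphism between $Z(K;(X,A))^G$ and the product $Z(K//G;(X,A))\times A^{k_G}$ by reading off the values of a fixed tuple on the $G$-orbits of the vertex set $[m]$. First I would characterize $Z(K;(X,A))$ internally via the \emph{support} $T(x):=\{i\in[m]:x_i\notin A\}$ of a tuple $x=(x_0,\dots,x_m)\in X^{m+1}$: Definition \ref{Polyhedral product} gives $x\in Z(K;(X,A))$ iff $T(x)\in K$. On the other hand, since $G$ acts on $X^{m+1}$ by permuting coordinates, $x$ is $G$-fixed iff the assignment $i\mapsto x_i$ is constant on each orbit $v\in O_G$. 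Combining these, $x\in Z(K;(X,A))^G$ iff $x$ is constant on every orbit and $T(x)\in K$. Since $T(x)$ is then automatically a union of orbits and $K$ is closed under subsets, any orbit $w\subseteq T(x)$ must itself lie in $V_G$; equivalently, on each of the $k_G$ orbits $w\in O_G\setminus V_G$ the tuple must take a constant value in $A$.

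Next I would define
\[
\Phi:Z(K;(X,A))^G\longrightarrow Z(K//G;(X,A))\times A^{k_G}
\]
by sending a fixed point $x$ to $\bigl((y_v)_{v\in V_G},(a_w)_{w\in O_G\setminus V_G}\bigr)$, where $y_v\in X$ and $a_w\in A$ are the common values of $x$ on $v$ and $w$ respectively. The key verification is that $(y_v)_{v\in V_G}$ lies in $Z(K//G;(X,A))$: its support inside $V_G$ is exactly $\{v\in V_G:v\subseteq T(x)\}=\widehat{q}(T(x))$, which is a simplex of $K//G$ by Definition \ref{strong quotient}, since $T(x)\in K$ is a union of full orbits. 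For the inverse map I would, given $\bigl((y_v),(a_w)\bigr)$ whose support in $V_G$ is $\tau$, build the tuple $x$ with $x_i=y_v$ when $i\in v\in V_G$ and $x_i=a_w$ when $i\in w\in O_G\setminus V_G$; then $T(x)=\bigcup_{v\in\tau}v$ is a simplex of $K$ by the very construction of $K//G$, and the two maps are continuous in the subspace topologies and mutually inverse.

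The main obstacle is the bookkeeping that puts simplices of $K//G$ in bijection with those $\sigma\in K$ which happen to be unions of whole vertex orbits; once Definition \ref{strong quotient} is unpacked into this correspondence, the theorem follows from the product decomposition $X^{m+1}=\prod_{v\in O_G}X^{|v|}$ together with the observation that the $G$-fixed locus inside each factor is the diagonal copy of $X$, on which the polyhedral-product constraint translates cleanly to the constraint defining $Z(K//G;(X,A))\times A^{k_G}$.
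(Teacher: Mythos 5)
Your proof is correct and follows essentially the same strategy as the paper: decompose $X^{m+1}$ along vertex orbits, observe that the $G$-fixed locus in each factor is the diagonal copy of $X$, and sort orbits according to whether they lie in $K$. Your version is actually more rigorous than the one in the paper --- by introducing the support function $T(x)$ and writing out the mutually inverse maps $\Phi$ and $\Phi^{-1}$, you make precise the identification of simplices of $K//G$ with orbit-unions in $K$ that the paper's proof only gestures at.
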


\begin{proof}
For any vertex $k\in[m]$, consider the orbits $G\cdot k$. For any $(x_0, \cdots, x_m)\in Z(K; (X, A))^G$, we have $x_i=x_k$ for any $i\in G\cdot k$. Hence the fixed points contain in the intersection of $Z(K; (X,A))$ with the diagonal images corresponding to all the vertex orbits $G\cdot k$ for $k\in [m]$.
\begin{enumerate}
\item
If $G\cdot k\notin K$, then $x_k$ has to be contained in $A$. And the orbit $G\cdot k$ corresponds to one copy of $A$ in the final fixed point set.
\item
If $G\cdot k \in K$, then $x_k$ could be any point in $X$. And the $G\cdot k$ corresponds to an one vertex in $K//G$.
\end{enumerate}
Depend on whether $G\cdot k$ belongs to $K$ or not, we could identify the intersection of diagonal images corresponding to a the vertex orbits $G\cdot k$ by its homeomorphic image $X$ or $A$. On the level of simplicial complex, this identification 
corresponds to collapse the simplicial complex $K$ with respect to the vertex orbits $O_G$ and omitting all the orbits that not contain in $K$, which gives the strong quotient $V//G$. Finally, we have $$Z(K; (X, A))^G=Z(K//G; (X,A))\times A^{k_G}.$$
\end{proof}

\begin{corollary}
Consider the real moment-angle complex $Z(K; (D^1, S^0))$ with $G$-action. Then the fixed point set $Z(K; (D^1, S^0))^G$ is a disjoint union of $2^k$'s copies of a different real moment-angle complex $Z(K//G; (D^1, S^0))$ for some $k$. 
\end{corollary}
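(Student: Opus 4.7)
The plan is to recognize this corollary as a direct specialization of Theorem~\ref{fixed point theorem}. First I would substitute the based CW-pair $(X, A) = (D^1, S^0)$ into the fixed point formula, which immediately gives
$$Z(K; (D^1, S^0))^G = Z(K//G; (D^1, S^0)) \times (S^0)^{k_G}.$$
This reduces the problem to understanding the factor $(S^0)^{k_G}$.

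The second step is the observation that $S^0$ is a two-point discrete space, so $(S^0)^{k_G}$ is a discrete space of cardinality $2^{k_G}$. For any space $Y$ and any finite discrete space $F$ of cardinality $N$, the product $Y \times F$ is canonically homeomorphic to the disjoint union of $N$ copies of $Y$. Applying this with $Y = Z(K//G; (D^1, S^0))$ and $N = 2^{k_G}$ yields
$$Z(K; (D^1, S^0))^G \cong \bigsqcup_{i=1}^{2^{k_G}} Z(K//G; (D^1, S^0)),$$
which is exactly the claim with $k = k_G$.

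I do not anticipate any serious obstacle, since both ingredients are already in place: the fixed point theorem has just been proved, and the decomposition of a product with a finite discrete space into a disjoint union is elementary. The only minor subtlety is to emphasize that the "different" real moment-angle complex appearing in the conclusion is built on the strong quotient $K//G$ of Definition~\ref{strong quotient}, and that the exponent $k$ in the corollary is precisely the integer $k_G$ counting vertex orbits not contained in $K$.
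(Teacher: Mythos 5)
Your proof is correct and is exactly the argument the paper implicitly intends: specialize Theorem~\ref{fixed point theorem} to $(X,A)=(D^1,S^0)$, note that $(S^0)^{k_G}$ is a discrete space with $2^{k_G}$ points, and identify the product with a disjoint union of $2^{k_G}$ copies of $Z(K//G;(D^1,S^0))$, taking $k=k_G$.
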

We are going to give several examples, starting with the trivial ones.
\begin{example}
As in Example \ref{simplex quotient}, let $K=\Delta^m$, for any $G<\aut(K)$, $$K//G=\Delta^k$$ for some $k\leq m$. Meanwhile $Z(K; (X,A))=X^{m+1}$ and $Z(K//G; (X,A))=X^{k+1}$. In the particular setting, Theorem \ref{fixed point theorem} reduces to the case of fixed point sets of a product space $X^{m+1}$ under the swapping action. 
\end{example}

\begin{figure}[h]
\centering
\begin{tikzpicture}
\draw (0,0) -- (0,2);
\draw (0,0) -- (-1.732,-1);
\draw (0,0) -- (1.732,-1);
\filldraw [black] (0,0) circle (2pt) node[below] {0};
\filldraw [black] (0,2) circle (2pt)node[above right] {1};
\filldraw [black] (-1.732,-1) circle (2pt)node[below left] {2};
\filldraw [black] (1.732,-1) circle (2pt)node[below right] {3};
\end{tikzpicture}
\caption{} 
\label{fig:star}
\end{figure}

\begin{example}
Let $K$ be the simplicial complex given by Figure \ref{fig:star}. Then $$Z(K; (X, A))=(X\times X\times A\times A)\cup (X\times A\times X\times A)\cup (X\times A\times A\times X).$$ 
Let $G=C_3$ be the cyclic group of order 3. The group $G$ acts on $K$ by sending $1\rightarrow 2\rightarrow 3\rightarrow 1$ and fixing $0$. Then $K//G$ is a single vertex. And $$Z(K; (X, A))^G=Z(K//G; (X,A))\times A^{k_G}=X\times A.$$
\end{example}

\begin{figure}[h]
\centering
\begin{tikzpicture}
\filldraw [black] (1,-1) circle (2pt) node[below right] {0};
\filldraw [black] (0,0) circle (2pt) node[above left] {1};
\filldraw [black] (-1.5,-0.5) circle (2pt) node[left] {2};
\filldraw [black] (1,2) circle (2pt) node[above right] {3};
\filldraw [black] (0,3) circle (2pt) node[above] {4};
\filldraw [black] (-1.5,2.5) circle (2pt) node[above left] {5};
\filldraw[color=black, fill=gray, ultra thick] (1,-1) -- (0,0) -- (-1.5,-0.5) -- cycle;
\filldraw[color=black, fill=gray, ultra thick] (1,2) -- (0,3) -- (-1.5,2.5) -- cycle;
\draw[ultra thick] (1,-1) -- (1,2);
\draw[ultra thick] (0,0) -- (0,3);
\draw[ultra thick] (-1.5,-0.5) -- (-1.5,2.5);
\end{tikzpicture}
\caption{} 
\label{fig:trilinder}
\end{figure}

\begin{example}
Let $K$ be the simplicial complex given by Figure \ref{fig:trilinder}. More specifically,
\[ 
\begin{array}{rl}
K =& \{\{0\},\{1\},\{2\},\{3\},\{4\},\{5\},\\
   & \{0,1\},\{1,2\},\{0,2\},\{3,4\},\{4,5\},\{3,5\},\{0,3\},\{1,4\},\{2,5\},\\
  & \{0,1,2\},\{3,4,5\}\}.
\end{array}
\]

Let $G=C_3$ be the cyclic group of order 3. Define the $G$-action on $K$ by sending $0\rightarrow 1\rightarrow 2\rightarrow 0$ and $3\rightarrow 4\rightarrow 5\rightarrow 3$. Then the strong quotient $K//G=\partial \Delta^1$.  And $$Z(K; (X, A))^G=Z(K//G; (X,A))\times A^{k_G}=(X\times A)\cup (A\times X).$$ 
\end{example}

One interesting examples is the symmetric group $\Sigma_{n+1}$ acts on spheres $S^n$. Let $K=\partial \Delta^{n}$ be the boundary of $n$-simplex. Then we have $\aut(K)=\Sigma_{n+1}$. The polyhedral product $Z(K; (D^1,S^0))=S^{n}$ is an $n$-sphere. Let $G$ be any subgroup of $\aut(K)$. Then $G$ acts on $S^{n}$ through the polyhedral product structure. Let $V$ be the set of orbits in the vertex set $[n]$. There are two possibilities:
\begin{enumerate}
\item
There is only one orbit, i.e., $V=\{[n]\}$, then $K//G=\emptyset$ and $Z(K; (D^1,S^0))^G=S^0$.
\item
There are $k$ orbits for some $k> 1$, then each orbits $\sigma\in V$ is a proper subset of $[n]$ that contained in $K$. Hence $K//G=\partial \Delta^{k-1}$. Moreover, $Z(K; (D^1,S^0))^G=Z(K//H; (D^1,S^0))=S^{k-1}$.
\end{enumerate}
\begin{corollary}
\label{sphere fixed point}
Consider the $n$-sphere $S^n$ defined by $Z(K, (D^1, S^0))$ where $K=\partial \Delta^n$.  It admits a $\Sigma_{n+1}$-action. For any subgroup $G<\Sigma_{n+1}$, the fixed point set is a $k$-sphere $S^k$ for some $k\leq n$. In particular, $k=n$ if and only if the subgroup $G$ is the trivial group.\hfill $\square$
\end{corollary}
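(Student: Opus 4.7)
The plan is to invoke Theorem \ref{fixed point theorem} and reduce the corollary to a purely combinatorial analysis of the strong quotient $K//G$ of $K = \partial\Delta^n$. Since $A = S^0$ here, the fixed point formula reads
\[
Z(K;(D^1,S^0))^G \;=\; Z(K//G;(D^1,S^0)) \times (S^0)^{k_G},
\]
so it suffices to identify $K//G$ and $k_G$ from the $G$-orbit structure on $[n]$ and then recognize the right-hand side as a sphere.

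I would split into cases according to $r$, the number of $G$-orbits on the $(n+1)$-element vertex set $[n]$, so $1 \le r \le n+1$. If $r = 1$, the unique orbit is all of $[n]$, which is \emph{not} a simplex of $\partial\Delta^n$; hence $V_G = \emptyset$, $k_G = 1$, and $K//G$ contains only the empty simplex, making $Z(K//G;(D^1,S^0))$ a one-point space. The fixed point set is then $\{*\} \times S^0 = S^0$. If instead $r \ge 2$, every orbit has size at most $n$, hence is a proper subset of $[n]$ and therefore a simplex of $K$; thus $V_G = O_G$ has $r$ elements and $k_G = 0$. A subset $T \subseteq V_G$ yields a simplex of $K//G$ precisely when the union of orbits in $T$ is a proper subset of $[n]$, and this happens exactly when $T \subsetneq V_G$. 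Therefore $K//G = \partial\Delta^{r-1}$ and the fixed point set is $Z(\partial\Delta^{r-1};(D^1,S^0)) = S^{r-1}$. Uniformly, the fixed point set is the sphere $S^{r-1}$ with $0 \le r-1 \le n$, so setting $k = r-1$ gives the first claim.

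For the equality assertion, $k = n$ forces $r = n+1$, i.e.\ every vertex is its own $G$-orbit; since $G \le \Sigma_{n+1}$ acts by permutation of $[n]$, fixing every vertex forces $G = \{e\}$, and the converse is immediate. I expect the only real bookkeeping subtlety to be the degenerate $r = 1$ case, where one must honor the convention that the polyhedral product over an empty vertex set (carrying only the empty simplex) is a single point, consistent with $Z(\emptyset;(X,A)) = A^0 = \{*\}$ in Definition \ref{Polyhedral product}. Once that convention is in hand, the rest of the argument is a direct unwinding of Definition \ref{strong quotient}.
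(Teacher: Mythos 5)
Your proposal is correct and follows essentially the same route as the paper: invoke Theorem~\ref{fixed point theorem}, split into the cases of one orbit versus several orbits, and identify $K//G$ as empty or as $\partial\Delta^{r-1}$ respectively. Your treatment is in fact slightly more careful than the paper's, in that you explicitly track the $A^{k_G}$ factor in the $r=1$ case (where $k_G=1$ and $K//G$ has empty vertex set) rather than folding it implicitly into $Z(K//G;(D^1,S^0))$, but the substance of the argument is identical.
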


Let X be a $G$-space and $H$ is a subgroup of $G$. The Weyl group $WH$ for a subgroup $H<G$, is given by $$WH=N_G(H)/H,$$ where $N_G(H)$ is the normalizer of $H$ in $G$. The Weyl group $WH$ acts on the fixed point set $X^H$ naturally. In fact, for any $[g]\in WH$, let $g$ be its representative in $N_G(H)$. Then for any $x\in X^H$ and $h\in H$, $$h.(g.x)=g.(g^{-1}hg.x)=g.x$$ 

For the purpose of computing Bredon cohomology of polyhedral products, we are interested in the homology of the fixed point sets as well as the Weyl group action on the homology. For the examples we have discussed above, especially for the sphere case, since we know that the fixed point set is a lower dimension sphere. We could obtain the homology by counting vertex orbits. But in general, it is a difficult task to compute the homology of certain polyhedral products and apply K\"{u}nneth formula. Bahri, Bendersky, Cohen and Gitler develop several tools in computing cohomology of polyhedral product. In \cite{BBCG15}, the authors construct a filtration of polyhedral product.

\begin{definition}
Let $K$ be a simplicial complex with $m+1$ vertices. $\Delta^m$ is filtered by the left lexicographical ordering of all faces. Then filter $K$ by
$$F_t(K)=K\cap F_t(\Delta^m),$$
and 
$$F_t(Z(K;(X, A)))=Z(F_t(K); (X, A)).$$
\end{definition}

Using this filtration, there are natural spectral sequences to compute homology and cohomology of $Z(K;(X, A))$. 

In another paper \cite{BBCG10}, the authors give the stable decomposition of polyhedral product,

\begin{theorem}[\cite{BBCG10}]
Let $(X, A)$ be connected, pointed CW-pairs, and $K$ is an abstract simplicial complex with vertex set $[m]$. we have a pointed homotopy equivalence $$f: \Sigma(Z(K;(X, A)))\rightarrow \Sigma(\bigvee_{I\subset[m]}\hat{Z}(K_I; (X, A)))$$
where $\hat{Z}$ is the polyhedral smash product and $K_I$ is the largest subcomplex of $K$ that contains $I$.
\end{theorem}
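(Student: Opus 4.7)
The plan is to reduce the statement to the classical stable splitting of a Cartesian product, due to James and Milnor: for pointed, connected CW-complexes $Y_0,\ldots,Y_m$, there is a natural pointed homotopy equivalence
\[
\Sigma(Y_0\times\cdots\times Y_m)\simeq \Sigma\bigvee_{\emptyset\neq I\subseteq[m]}\bigwedge_{i\in I}Y_i,
\]
where the projection onto the wedge summand indexed by $I$ is the suspension of the natural quotient map which smashes the factors in $I$ and collapses the rest. The first step is to apply this splitting to the ambient product $X_0\times\cdots\times X_m$, in which each $X_i$ equals $X$ or $A$, and inside which $Z(K;(X,A))$ sits.

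The second step is to introduce the polyhedral smash product $\hat{Z}(K;(X,A))$ as the image of $Z(K;(X,A))$ under the quotient $X_0\times\cdots\times X_m\to X_0\wedge\cdots\wedge X_m$, and to make the key observation that for each $I\subseteq[m]$ the coordinate projection to the $I$-smash product carries $Z(K;(X,A))$ into $\hat{Z}(K_I;(X,A))$: a point lying above $\sigma\in K$ is sent to a point lying above $\sigma\cap I\in K_I$. Summing these projections after suspension produces the candidate map $f$ of the statement.

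To prove that $f$ is a homotopy equivalence, I would argue by induction on the number of simplices of $K$. The base case, $K$ a single vertex (or empty), reduces directly to the James--Milnor splitting applied to $X\times A^{m}$ (respectively $A^{m+1}$). For the inductive step, write $K=K'\cup\{\sigma\}$ for a maximal simplex $\sigma$; this produces a cofibre sequence relating $Z(K';(X,A))$, $Z(K;(X,A))$, and a smash-type quotient supported on $\sigma$. A parallel cofibre decomposition holds on the target of $f$, contributed by exactly those subsets $I\subseteq[m]$ with $\sigma\subseteq I$ (since only such $I$ see $K_I$ change when $\sigma$ is attached). Naturality of the James--Milnor splitting shows that $f$ respects these cofibre sequences, and the inductive hypothesis combined with the five lemma for cofibres upgrades the equivalence to $K$.

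The main obstacle is the combinatorial bookkeeping in the inductive step: one must verify that attaching a top simplex $\sigma$ modifies $K_I$ precisely for $I\supseteq\sigma$, and that the resulting new smash summands on the target assemble exactly into the cofibre of $Z(K';(X,A))\hookrightarrow Z(K;(X,A))$. The connectedness hypothesis on $(X,A)$ is used throughout to ensure that the quotients $X_0\times\cdots\times X_m\to X_0\wedge\cdots\wedge X_m$ restricted to the relevant subspaces have the expected homotopy type, so that the matching between the two cofibre sequences is an actual homotopy equivalence rather than merely a stable homology isomorphism.
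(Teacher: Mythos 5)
The paper does not prove this theorem; it is quoted verbatim from \cite{BBCG10} and used as a black box, so there is no in-paper proof to compare against. Your task is therefore really to reconstruct the argument of Bahri--Bendersky--Cohen--Gitler, and your sketch does capture its two essential ingredients: the James--Milnor stable splitting of a finite product, and the observation that the coordinate projections $X^{[m]}\to \bigwedge_{i\in I}X_i$ carry $Z(K;(X,A))$ into $\hat{Z}(K_I;(X,A))$. (You correctly work with the restriction $K_I=\{\sigma\cap I:\sigma\in K\}$; the parenthetical gloss in the paper's statement, ``the largest subcomplex of $K$ that contains $I$,'' is a misstatement of that definition.) The actual BBCG argument is organized around the colimit description $Z(K;(X,A))=\colim_{\sigma\in K}Z(\sigma;(X,A))$ and a retraction argument rather than a one-simplex-at-a-time induction, but the two are close in spirit: both reduce the problem to the simplex case, where James--Milnor applies directly, and then glue.

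The gap in your sketch is in the inductive step, and it is not merely bookkeeping. You assert a cofibre sequence relating $Z(K';(X,A))$, $Z(K;(X,A))$, and ``a smash-type quotient supported on $\sigma$,'' and a ``parallel'' cofibre sequence on the wedge side, and then invoke ``the five lemma for cofibres.'' To make this work you must (a) identify the cofibre of $Z(K')\hookrightarrow Z(K)$ explicitly --- it is $\bigl(X^{\sigma}/\partial(X^{\sigma})\bigr)\wedge A^{[m]\setminus\sigma}_{+}$-type object where $\partial$ means the fat wedge over $\partial\sigma$, and this identification uses that $\sigma$ is a maximal face so $\partial\Delta^{\sigma}\subseteq K'$; (b) show that after one suspension the map $f$ sends this cofibre \emph{compatibly} onto the wedge of new summands, i.e.\ those indexed by $I\supseteq\sigma$ (your combinatorial claim that $K_I\neq K'_I$ exactly when $I\supseteq\sigma$ is correct, but compatibility of the maps is an extra assertion); and (c) know that a map of cofibre sequences with two legs weak equivalences has the third a weak equivalence --- this is fine for cofibre sequences of CW-complexes but needs to be said, since ``five lemma for cofibres'' is not a theorem without hypotheses. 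Points (a) and (b) are where the real content lives; as written, ``naturality of the James--Milnor splitting shows that $f$ respects these cofibre sequences'' is a wish, not an argument, because the James--Milnor splitting is natural in maps of products but the inclusion $Z(K')\hookrightarrow Z(K)$ is not a product map. Spelling out the comparison of the two horizontal cofibre sequences --- in particular verifying that the boundary maps agree after suspension --- is the step that would turn this sketch into a proof.
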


Later in \cite{Ali14}, Al-Raisi shows that the stable decomposition of a polyhedral product is also equivariant under the group action we defined in the beginning of this section. Therefore stable decomposition provide a general method to compute the Weyl group action on the homology of the fixed point set.

For example, the inclusion $$Z(K;(X,*))\subset X^m$$ induces an $\Aut(K)$-equivariant epimorphism $$H^*(X^m)\rightarrow H^*(Z(K;(X,*))).$$

\section{Bredon coefficient systems}
\begin{definition}
The canonical orbit category of group $G$, denoted by $\og$, is the category whose objects are G-spaces $G/H$ and morphisms are $G$-maps. 
\end{definition}

There is a $G$-map $f:G/H \rightarrow G/K$ if and only if $gHg^{-1}<K$. Notice that if $f(eH)=gK$ for some $G\in G$, then $$eK=f(g^{-1}H)=f(g^{-1}hg\cdot g^{-1}H)= g^{-1}hg\cdot f(g^{-1}H)=g^{-1}hg \cdot eK$$ for any $h\in H$. Hence $g^{-1}Hg<K$.

\begin{definition}
Let $R$ be a commutative ring and $Mod_R$ be the category of $R$-modules. A Bredon coefficient system over $R$ is a contravariant functor $\og\rightarrow Mod_R$. The category of Bredon coefficient systems over $R$ is denoted by $\cg^R$. When $R=\mathbb{Z}$, we  use $\cg$ for simplicity.
\end{definition}

\begin{example}
Given a based $G$-CW-complex, the n-th equvariant homotopy group $\underline{\pi_n}(X)$ for $n\geq 2$, is a Bredon coefficient system given by $$\underline{\pi_n}(X)(G/H)=\pi_n(X^H)$$
\end{example}

\begin{example}
Let X be a $G$-CW-complex, we can define the cellular chain complex of coefficient systems $\underline{C_*}(X)$, where
$$\underline{C_n}(X)(G/H)=H_n((X^n)^H, (X^{n-1})^H, R).$$
And define $$\underline{H_n}(X)=H_n(\underline{C_*}(X)).$$
\end{example}

By a general categorical argument, the category $\cg^R$ is an Abelian category with enough injectives. So we could talk about homological algebra concept such as homology and cohomology of chains and cochains in this category.

\begin{definition}
\label{Bredon chomology and homology}
Let $\underline{M}$ be a Bredon coefficient system and $X$ be a $G$-CW-complex. Define a cochain complex
$$\underline{C^n}(X; \underline{M})=\Hom_{\cg^R}(\underline{C_n}(X),\underline{M}),$$
Its cohomology, $$H^*_G(X, \underline{M}):=H^*(\underline{C^*}(X;\underline{M}))$$ is called the Bredon cohomology of $X$ with coefficient $\underline{M}$. To define Bredon homology, we need a covariant functor $\underline{N}:\og\rightarrow Mod_R$ as a coefficient system. Define the cellular chain by 
$$C_n(X; \underline{N})=\underline{C_n}(X)\otimes_{\cg^R} \underline{N}=\int^{G/H}\underline{C_n}(X)(G/H)\otimes_R \underline{N}(G/H)$$
In other word, the tensor product $\otimes_{\cg^R}$ is given by the coend of the two functors. Bredon homology of $X$ with coefficient $N$ is given by 
$$H^G_n(X,\underline{N})=H_n(C_*(X; \underline{N}))$$
\end{definition}

\begin{theorem}[\cite{Bredon},\cite{May96}]
\label{universal coefficient spectral sequence}
There are universal coefficient spectral sequences
$$E_2^{p,q}=\Ext_{\cg^R}^{p,q}(\underline{H_*}(X), \underline{M})\Rightarrow H_G^n(X, \underline{M}),$$
and
$$E^2_{p,q}=\Tor^{\cg^R}_{p,q}(\underline{H_*}(X), \underline{N})\Rightarrow H^G_n(X, \underline{M}).$$
\end{theorem}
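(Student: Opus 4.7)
The plan is to realize Bredon (co)homology as the hyperhomology of the cellular chain complex $\underline{C_*}(X)$ with coefficients in $\underline{M}$ (resp.\ $\underline{N}$), and then extract each spectral sequence as one of the two standard filtrations of a Cartan--Eilenberg double complex.

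The first step is to check that $\underline{C_n}(X)$ is projective in $\cg^R$. Because $X$ is a $G$-CW-complex, its equivariant $n$-cells have the form $G/H_\alpha\times D^n$, so the relative cellular functor decomposes as a direct sum of representable coefficient systems $R\bigl[\Hom_G(-,G/H_\alpha)\bigr]$. By the Yoneda lemma these representables are projective in $\cg^R$, hence $\underline{C_*}(X)$ is a bounded-below complex of projectives whose homology is $\underline{H_*}(X)$. Consequently $\Hom_{\cg^R}(\underline{C_*}(X),\underline{M})$ computes the hypercohomology $\mathbb{H}^*(\underline{H_*}(X),\underline{M})$, and by Definition~\ref{Bredon chomology and homology} this cochain complex is exactly $\underline{C^*}(X;\underline{M})$, so its cohomology is $H^*_G(X,\underline{M})$.

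For the cohomology spectral sequence I would take a Cartan--Eilenberg injective resolution $\underline{I^{*,*}}$ of $\underline{M}$, available since $\cg^R$ has enough injectives, and form the double complex $D^{p,q}=\Hom_{\cg^R}(\underline{C_p}(X),\underline{I^{q,*}})$. Filtering in one direction, projectivity of $\underline{C_p}$ makes $\Hom_{\cg^R}(\underline{C_p}(X),-)$ exact, so each column of $D^{*,*}$ resolves $\Hom_{\cg^R}(\underline{C_p}(X),\underline{M})$ and the corresponding spectral sequence collapses onto $H^*_G(X,\underline{M})$. Filtering in the other direction first takes cohomology in the $\underline{C_*}$ direction, yielding $E_1^{p,q}=\Ext^p_{\cg^R}(\underline{C_q}(X),\underline{M})$ and then at $E_2$ the groups $\Ext^p_{\cg^R}(\underline{H_q}(X),\underline{M})$, which is the desired abutment. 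For the homology statement I would repeat the argument using $\underline{C_*}(X)\otimes_{\cg^R}\underline{N}$ in place of the Hom complex and a projective resolution of $\underline{N}$, with $\Tor$ replacing $\Ext$.

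The main technical obstacle will be identifying the $E_2$ page canonically with $\Ext$ (resp.\ $\Tor$) of the \emph{homology} $\underline{H_q}(X)$ rather than of the chain modules $\underline{C_q}(X)$; this requires the Cartan--Eilenberg comparison theorem together with the projectivity of $\underline{C_q}(X)$ established in the first step. Convergence itself is routine, since $\underline{C_*}(X)$ is concentrated in nonnegative degrees, $G$ is finite, and only finitely many isotropy types appear in each dimension, so both filtrations are bounded in each total degree.
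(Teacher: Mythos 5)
The paper does not prove this theorem --- it is cited directly from \cite{Bredon} and \cite{May96} --- so I will assess your sketch on its own terms. The skeleton is the standard one: establish that each $\underline{C_n}(X)$ is a direct sum of representables $R\bigl[\Hom_G(-,G/H_\alpha)\bigr]$ and hence projective in $\cg^R$, then compare the two filtration spectral sequences of the double complex $D^{p,q}=\Hom_{\cg^R}(\underline{C_p}(X),\underline{I^q})$ built from an injective resolution $\underline{I^*}$ of $\underline{M}$. This is the right framework, and your first paragraph carries out the projectivity step correctly.

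However, the second and third paragraphs misattribute the key step of the second filtration. Taking cohomology first along the $\underline{C_*}$-index $p$ gives $E_1^{p,q}=H^p\bigl(\Hom_{\cg^R}(\underline{C_*}(X),\underline{I^q})\bigr)$, \emph{not} $\Ext^p_{\cg^R}(\underline{C_q}(X),\underline{M})$ as you wrote; the latter is the $E_1$ term of the \emph{other} filtration (and it vanishes for $p>0$ precisely because $\underline{C_q}(X)$ is projective, which is what collapses that spectral sequence onto $H^*_G(X,\underline{M})$). The identification $H^p\bigl(\Hom(\underline{C_*}(X),\underline{I^q})\bigr)\cong\Hom_{\cg^R}(\underline{H_p}(X),\underline{I^q})$, whence $E_2^{p,q}=\Ext^q_{\cg^R}(\underline{H_p}(X),\underline{M})$, uses the exactness of $\Hom_{\cg^R}(-,\underline{I^q})$, i.e.\ the \emph{injectivity} of $\underline{I^q}$ --- not the projectivity of $\underline{C_q}(X)$ nor the Cartan--Eilenberg comparison theorem as your final paragraph claims. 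Analogously, in the homology case the isomorphism $H_q\bigl(\underline{C_*}(X)\otimes_{\cg^R}\underline{P_p}\bigr)\cong\underline{H_q}(X)\otimes_{\cg^R}\underline{P_p}$ rests on the flatness of the projective $\underline{P_p}$, while the flatness of $\underline{C_q}(X)$ collapses the other filtration. Once these two roles are untangled your argument is complete; the convergence remarks are fine since both double complexes are first-quadrant.
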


\subsection{Bredon coefficient systems in the language of path algebras}
Using the language of path algebra, we could also give a more ``algebraic'' description of Bredon cohomology and homology. In this subsection, We will define path algebra and give an alternative defnition of Bredon coefficient systems as well as Bredon homology and cohomology. The readers who are interested in this topic could check for more details of path algebras and the related representation theory in the summary book \cite{ASS06}.

\begin{definition}
A quiver $Q=(V, E, s, t)$ is a directed graph, i.e. a graph that associates each edge a direction, where $V$ is the set of vertices and $E$ is the set of edges along with two maps $s,t: E\rightarrow V$ that for each edge $\alpha\in E$, the images $s(\alpha)$ and $t(\alpha)$ are the source and target of the edge $\alpha$ respectively. 
\end{definition}

\begin{example}
A small category $\mathscr{C}$ is naturally a quiver whose objects and morphisms are vertices and edges of the quiver respectively. We denote the quiver associated  with $\mathscr{C}$ by $Q_\mathscr{C}$. 
\end{example}

\begin{definition}
For any quiver $Q=(V, E, s, t)$, the set of paths of $Q$, denoted by  $P_Q$, consists of the following elements:
\begin{enumerate}[label=\roman*)]
\item
For each vertex $v\in V$, there is a trivial path $e_v$, and set $s(e_v)=t(e_v)=v$;
\item
All the finite sequences $\alpha_n\alpha_{n-1}\cdots\alpha_1$ where $\alpha_i\in E$ for each $i$ and $t(\alpha_k)=s(\alpha_{k+1})$ for $k=1,2,\cdots, n-1$. In other word, an actual path on the quiver.
\end{enumerate}
\end{definition}

Moreover, we can define a multiplication $\circ$ on $P_Q$. For any two paths $\alpha_n\alpha_{n-1}\cdots\alpha_1$, $\beta_m\beta_{m-1}\cdots\beta_1 $ in $P_Q$,
 
\[
\alpha_n\cdots\alpha_1 \circ \beta_m\cdots\beta_1 =\left\{
\begin{array}{cl}
\alpha_n\cdots\alpha_1\beta_m\cdots\beta_1 & \text{if } t(\beta_m)=s(\alpha_1),\\
0 & \text{otherwise.}
\end{array}
\right.
\]

\begin{definition}
Let $R$ be a commutative ring, the path algebra $RQ$ is an associative algebra with basis $P_Q$ whose multiplication is linearly induced by the multiplication $\circ$ on $P_Q$ over $R$. In addition, if $V$ is an finite set, then the algebra $RQ$ has a multiplicative identity $$1_{RQ}=\sum_{v\in V}e_v,$$ and these $e_v$'s are idempotents of $RQ$.
\end{definition}
\begin{example}
Let $Q$ be the quiver given by Figure \ref{fig:An}. Then over a field $k$, the path algebra $kQ$ is a $k$-algebra of dimension $\frac{n^2+n}{2}$. And it is isomorphic to the algebra of $n\times n$ upper triangular matrices over $k$.

\begin{figure}[h]
\centering
\begin{tikzpicture}
\filldraw [black] (-5,0) circle (2pt) node[above] {1};
\filldraw [black] (-3,0) circle (2pt) node[above] {2};
\filldraw [black] (-1,0) circle (2pt) node[above] {3};
\filldraw [black] (4,0) circle (2pt) node[above] {n-1};
\filldraw [black] (6,0) circle (2pt) node[above] {n};
\filldraw [black] (1.4,0) circle (1pt);
\filldraw [black] (1.8,0) circle (1pt);
\filldraw [black] (2.2,0) circle (1pt);
\draw[ultra thick, ->](-4.8,0)--(-3.2, 0);
\draw[ultra thick, ->](-2.8,0)--(-1.2, 0);
\draw[ultra thick, ->](-0.8,0)--(0.8, 0);
\draw[ultra thick, ->](2.8,0)--(3.8, 0);
\draw[ultra thick, ->](4.2,0)--(5.8, 0);
\end{tikzpicture}
\caption{} 
\label{fig:An}
\end{figure}
\end{example}

\begin{definition}
A relation of a quiver $Q$ is a subspace of RQ spanned by linear combinations of paths having a common source and a common target. Let $S$ be a set of relations  of $Q$. The path algebra with relation $S$, denoted by $RQ_S$ is $RQ/I_S$ where $I_S$ is the two-sided ideal generated by $S$. 
\end{definition}

For an algebra $A$ over $R$, Let $Mod_{A}$ be the category of right-$A$-modules and ${}_{A}Mod$ be the category of left-$A$-modules.

\begin{lemma}
If $G$ is a finite group, the canonical orbit category $\og$ is a small category with finitely many objects. Let $Q= Q_{\og}$ be the quiver associated with $\og$. And $S$ is the set of relations given by the equivalences of morphisms in the category $\og$. Then a Bredon coefficient system is naturally a right-$RQ_S$-module. And there is an equivalence of Abelian categories $\cg^R\rightarrow Mod_{RQ_S}$. A covariant functor $\underline{N}:\og\rightarrow Mod_R$ is naturally a left-$RQ_S$-module and the coend of two such functors is the tensor product of a right $RQ_S$-module with a left $RQ_S$-module over $RQ_S$.
\end{lemma}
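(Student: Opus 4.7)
The plan is to treat this as an instance of the standard equivalence between representations of a finite $R$-linear category and modules over its associated path algebra with relations; the content is to spell out the bookkeeping for the specific category $\og$ and verify that the relations in $S$ are exactly those imposed by functoriality.

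First I would check the preliminary finiteness claim: since $G$ is a finite group, there are only finitely many subgroups $H\leq G$ and hence only finitely many orbits $G/H$ up to $G$-isomorphism; moreover the $G$-set $\Hom_G(G/H,G/K)$ is finite for each pair, so $Q=Q_{\og}$ is a finite quiver (after choosing one representative of each isomorphism class of object, together with all morphisms as edges). This ensures $1_{RQ_S}=\sum_{H}e_{G/H}$ exists as the sum of idempotents corresponding to the objects.

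Next I would construct the functor $\Phi:\cg^R\to Mod_{RQ_S}$. Given a contravariant functor $\underline{M}:\og\to Mod_R$, define the underlying $R$-module
$$\Phi(\underline{M})=\bigoplus_{H}\underline{M}(G/H),$$
and make this into a right $RQ_S$-module by letting the idempotent $e_{G/H}$ act as the projector onto the summand $\underline{M}(G/H)$, and letting an edge $\alpha:G/H\to G/K$ act on the right by $m\cdot\alpha=\underline{M}(\alpha)(m)$ for $m\in\underline{M}(G/K)$ and as zero on the other summands. The key point is that this right action factors through $RQ_S$: a relation in $S$ is by definition an $R$-linear combination of paths that become equal under composition in $\og$, so functoriality of $\underline{M}$ forces it to act trivially. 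For the inverse $\Psi$, send a right $RQ_S$-module $N$ to the functor $\underline{N}(G/H):=N e_{G/H}$, with $\underline{N}(\alpha)$ given by right multiplication by $\alpha$; the relations again ensure functoriality, and the unit decomposition $1=\sum e_{G/H}$ gives $N=\bigoplus Ne_{G/H}$ so $\Phi\Psi\cong\mathrm{id}$ and $\Psi\Phi\cong\mathrm{id}$. Morphisms on both sides translate into each other via the same prescription, so $\Phi$ is fully faithful, and it is exact because (co)limits of functors into $Mod_R$ are computed objectwise, which matches (co)limits of modules computed summand-by-summand; this gives the equivalence of abelian categories.

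For covariant $\underline{N}:\og\to Mod_R$ the same construction produces a left $RQ_S$-module: an edge $\alpha:G/H\to G/K$ now acts on the left of $\underline{N}(G/H)$ as $\underline{N}(\alpha)$, landing in $\underline{N}(G/K)$, and the composition convention for paths matches the composition of morphisms in $\og$. Finally, for the coend claim, I would use the standard identification
$$\int^{G/H}\underline{M}(G/H)\otimes_R\underline{N}(G/H)\;\cong\;\Bigl(\bigoplus_H\underline{M}(G/H)\otimes_R\underline{N}(G/H)\Bigr)\Big/\!\sim,$$
where the coend relations $\underline{M}(\alpha)(m)\otimes n\sim m\otimes\underline{N}(\alpha)(n)$ for every morphism $\alpha$ are exactly the relations defining $\Phi(\underline{M})\otimes_{RQ_S}\Psi^{\mathrm{op}}(\underline{N})$ after identifying $m\cdot\alpha\otimes n=m\otimes\alpha\cdot n$ through the idempotent decomposition. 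Verifying this on the generating pieces (trivial paths and single edges) and extending $R$-linearly finishes the argument.

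The routine part is the verification that $\Phi$ and $\Psi$ are inverse, which essentially repeats the finite-coproduct/idempotent decomposition. The main obstacle I expect is keeping the variance conventions straight — in particular being careful that \emph{right} modules correspond to \emph{contravariant} functors given our convention for composition of paths, and that the relation set $S$ (defined here as the subspace of linear combinations of parallel paths that vanish in $\og$) really generates the two-sided ideal whose quotient is the functor category. Once these conventions are pinned down, the rest of the proof is formal.
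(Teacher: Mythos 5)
The paper states this lemma without proof, pointing the reader to \cite{ASS06} for the general theory of path algebras and representations of quivers; there is therefore no ``paper's own proof'' to compare against. Your argument is the standard one and is correct: you reduce to finitely many objects, define $\Phi(\underline{M})=\bigoplus_H\underline{M}(G/H)$ with idempotents $e_{G/H}$ acting as projections, check that the path-algebra multiplication convention $\alpha\circ\beta$ (defined when $t(\beta)=s(\alpha)$) matches contravariant functoriality so that contravariant functors give \emph{right} modules and covariant ones give \emph{left} modules, observe that the ideal generated by $S$ is killed precisely because a functor collapses parallel paths that agree in $\og$, build the inverse via $N\mapsto(G/H\mapsto Ne_{G/H})$, and finally identify the coend relations $\underline{M}(\alpha)(m)\otimes n\sim m\otimes\underline{N}(\alpha)(n)$ with the balancing relations defining $\otimes_{RQ_S}$ through the idempotent decomposition. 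The one place you should be slightly more careful is the variance check, which you flag yourself: with the paper's convention $\alpha\circ\beta=\alpha\beta$ when $t(\beta)=s(\alpha)$, one verifies $(m\cdot\alpha)\cdot\beta=\underline{M}(\beta)\bigl(\underline{M}(\alpha)(m)\bigr)=\underline{M}(\alpha\circ\beta)(m)=m\cdot(\alpha\circ\beta)$, which does confirm right modules $\leftrightarrow$ contravariant functors; spelling that one line out would close the only gap in the writeup. Everything else is formal bookkeeping, exactly as you say.
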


\begin{example}
Let $G=C_2$, the cyclic group of order 2.  the canonical orbit category is shown in Figure \ref{fig:C2}. The path algebra with relations corresponds to $\og$ has dimension 5, and a basis is given by $\{e_{C_2/C_2}, e_{C_2/0}, \alpha, \beta, \beta\alpha\}$. Notice that the loop $\beta$ is given by the Weyl group action. And since the Weyl group for trivial group is the whole group $C_2$, then set of relations in this case is $\{\beta^2\}$.

\begin{figure}[h]
\centering
\begin{tikzpicture}[scale=0.7]
\filldraw[black] (0,0) circle (2pt) node[above left] {$C_2/0$};
\filldraw[black] (0,3) circle (2pt) node[above] {$C_2/C_2$};
\node[left] at (0,1.5) {$\alpha$};
\node at (0,-1.5) {$\beta$};
\draw[ultra thick, ->](0,0.2)--(0,2.8);
 \draw [ultra thick, ->] (-0.3,0) arc(105:435:1);
\end{tikzpicture}
\caption{Canonical orbit category of $C_2$} 
\label{fig:C2}
\end{figure}
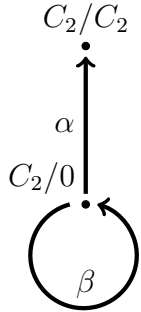
\end{example}

\subsection{Reduced Bredon coefficient systems}
In order to compute Bredon cohomology, we need to reduce the Bredon coefficient systems as simply as possible. Since the conjugation of subgroups in $G$ induces isomorphisms in the canonical orbit category $\og$, it is enough that we just consider one subgroup of $G$ for each conjugate class to obtain a category which is equivalent to $\og$ but with fewer objects. In explicit, Let $\widehat{\og}$ be the fully subcategory of $\og$ whose object set consists one and only one object for each isomorphism class in $\og$. This definition is not canonical, but $widehat{\og}$ is unique up to category equivalence.

We define a reduced Bredon coefficient system over a commutative ring $R$ to be a contravariant functor $$\underline{M}: \widehat{\og}\rightarrow Mod_R.$$

The following lemma follows immediately from the definition.
 
\begin{lemma}
The category of reduced Bredon coefficient systems over $R$ is equivalent to $\cg^R$.
\end{lemma}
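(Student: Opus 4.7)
The plan is to reduce the statement to the standard categorical principle that if $F:\mathcal{A}\to\mathcal{B}$ is an equivalence of small categories, then restriction along $F$ induces an equivalence of presheaf categories $\text{Fun}(\mathcal{B}^{op},Mod_R)\to\text{Fun}(\mathcal{A}^{op},Mod_R)$. Concretely, I would identify the relevant equivalence as the inclusion $\iota:\widehat{\og}\hookrightarrow\og$, and then verify that restriction $\iota^*:\cg^R\to\text{Fun}(\widehat{\og}^{op},Mod_R)$ is the desired equivalence of categories.

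First I would check that $\iota$ is an equivalence. Fully faithfulness is immediate, since $\widehat{\og}$ is a full subcategory of $\og$ by construction. Essential surjectivity uses the characterization recalled right after the definition of $\og$: a $G$-map $G/H\to G/K$ exists iff $gHg^{-1}\subset K$ for some $g\in G$, and is an isomorphism iff $H$ and $K$ are conjugate. Therefore every object $G/H\in\og$ is isomorphic inside $\og$ to the unique chosen representative $G/H'$ of its conjugacy class lying in $\widehat{\og}$, so $\iota$ is essentially surjective.

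Next I would produce a quasi-inverse to $\iota^*$. For each subgroup $H$ of $G$, fix once and for all a representative $H'$ of its conjugacy class (with $H'=H$ when $G/H\in\widehat{\og}$) and an isomorphism $\phi_H:G/H\to G/H'$ in $\og$. Given a reduced coefficient system $\underline{M}:\widehat{\og}^{op}\to Mod_R$, extend it to $\og$ by setting $\widetilde{\underline{M}}(G/H)=\underline{M}(G/H')$, and sending a morphism $f:G/H\to G/K$ to $\underline{M}(\phi_K\circ f\circ\phi_H^{-1})$. Functoriality is routine, and $\iota^*\widetilde{\underline{M}}=\underline{M}$ on the nose. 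The composition $\widetilde{\iota^*\underline{N}}$ is naturally isomorphic to $\underline{N}$ via the components $\underline{N}(\phi_H):\underline{N}(G/H')\to\underline{N}(G/H)$.

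The only point requiring care is that the extension depends on the arbitrary choices of the isomorphisms $\phi_H$; this is not a real obstacle because any two such extensions are related by a canonical natural isomorphism (whose components on $G/H$ are induced by elements of $N_G(H')/H'$, i.e.\ of the Weyl group), so at the level of categories the choice is invisible. Alternatively, one can avoid making choices entirely by invoking the general $2$-categorical fact that the pseudofunctor $\mathcal{A}\mapsto\text{Fun}(\mathcal{A}^{op},Mod_R)$ sends equivalences to equivalences, applied directly to $\iota$. Either route yields the asserted equivalence $\cg^R\simeq\text{Fun}(\widehat{\og}^{op},Mod_R)$.
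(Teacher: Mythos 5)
Your proof is correct and takes essentially the same route the paper has in mind: the paper simply states that the lemma ``follows immediately from the definition,'' which amounts to invoking the standard fact that restriction along an equivalence $\widehat{\og}\hookrightarrow\og$ induces an equivalence of functor categories into $Mod_R$. You have merely spelled out the details (full faithfulness, essential surjectivity via conjugacy, construction of a quasi-inverse) that the paper leaves implicit.
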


Therefore in the future discussion, we will only talk about reduced Bredon coefficient system and reduced canonical orbit category. In order to simplify the notation, denote the reduced canonical orbit category again by $\og$.

\begin{example}
Let $G=\Sigma_3$, the (reduced) canonical orbit category is shown in Figure \ref{fig:S3}. Notice that $\sigma,\tau$ are the generators of the Weyl group $W\{e\}=\Sigma_3$ and $\iota$ is the generator of $W\langle(123)\rangle=\Z_2$. The set of relations is $\{\beta\alpha-\delta\gamma, \iota^2, \sigma^3, \tau^2, \tau\sigma\tau-\sigma^2,\sigma\tau\sigma-\tau\}$.

\begin{figure}[h]
\centering
\begin{tikzpicture}[scale=0.7]
\filldraw[black] (0,0) circle (2pt);
\node[above] at (0, 0.3){$\Sigma_3/\{e\}$};
\filldraw[black] (-3,3) circle (2pt) node[right] {$\Sigma_3/\langle (123)\rangle$};
\filldraw[black] (3,4) circle (2pt) node[right] {$\Sigma_3/\langle (12)\rangle$};
\filldraw[black] (0,6) circle (2pt) node[above] {$\Sigma_3/\Sigma_3$};
\node[below left] at (-1.5,1.5) {$\alpha$};
\node[above left] at (-1.5,4.5) {$\beta$};
\node[below right] at (1.5,2) {$\gamma$};
\node[above right] at (1.5,5) {$\delta$};
\node at (0,-1.5) {$\sigma$};
\node at (0,-2.5) {$\tau$};
\node at (-4.5,3) {$\iota$};
\draw[ultra thick, ->](-0.2,0.2)--(-2.8, 2.8);
\draw[ultra thick, ->](0.2,0.2)--(2.8, 3.8);
\draw[ultra thick, ->](-2.8,3.2)--(-0.2, 5.8);
\draw[ultra thick, ->](2.8,4.2)--(0.2, 5.8);
\draw [ultra thick, ->] (-0.3,0) arc(105:435:1);
\draw [ultra thick, ->] (-3,3.3) arc(15:345:1);
\draw [ultra thick, ->] (-0.45,0) arc(110:430:1.5);
\end{tikzpicture}
\caption{} 
\label{fig:S3}
\end{figure}
\end{example}

\section{Bredon cohomology over $\mathbb{Q}$}

In order to apply the universal coefficient spectral sequence, we need to find an injective resolution for a given coefficient system. However, it could be a difficult task. It is not known to the author whether there is a general method to construct an injective resolution for any coefficient system over integers. However, over $\Q$, Doman in \cite{Doman88} constructed the injective envelope for any Bredon cefficient system over rationals.

\begin{definition}
For any subgroup $H<G$, let $V_H$ be a left $\Q(WH)$-module, define a Bredon coefficient system $I(V_H)$ by 
$$\underline{I(V_H)}(G/K)= \Hom_{\Q(WH)}(\Q((G/K)^H), V_H).$$ 
For a $G$-map $f:G/K\rightarrow G/K'$, the map $\overline{f}: \Q((G/K)^H)\rightarrow \Q((G/K')^H)$ is induced by $f$. Then 
$$\underline{I(V_H)}(f): \underline{I(V_H)}(G/K')\rightarrow \underline{I(V_H)}(G/K)$$ is defined by $\underline{I(V_H)}(f)(g)=g\circ \overline{f}$ where $g\in I(V_H)(G/K')$. 
\end{definition}

\begin{lemma}[\cite{Doman88}]
\label{injecitveness}
The coefficient system $I(V_H)$ is an injective object in the category $\cg^{\Q}$.
\end{lemma}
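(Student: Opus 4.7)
The strategy is to realize $\underline{I(V_H)}$ as the right adjoint of the evaluation functor $\mathrm{ev}_H \colon \cg^{\Q} \to Mod_{\Q(WH)}$ that sends $\underline{M}$ to $\underline{M}(G/H)$, where the left $\Q(WH)$-action comes from $WH = \Aut_{\og}(G/H)$ acting by functoriality. Once this adjunction is in hand, semisimplicity of $\Q(WH)$ finishes the job by the standard principle that a right adjoint of an exact functor takes injectives to injectives.

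First I would write down and verify the key adjunction
\[
\Hom_{\cg^{\Q}}\bigl(\underline{M},\, \underline{I(V_H)}\bigr) \ \cong \ \Hom_{\Q(WH)}\bigl(\underline{M}(G/H),\, V_H\bigr).
\]
The right-hand side of the definition, $\Hom_{\Q(WH)}(\Q((G/K)^H), V_H)$, is exactly the coinduction / right Kan extension recipe: the set $(G/K)^H$ is naturally identified with $\og(G/H, G/K)$ via evaluation at $eH$, and under this identification $\og(G/H, G/H) = WH$. The adjunction bijection sends a natural transformation $\phi \colon \underline{M} \to \underline{I(V_H)}$ to its component $\phi_{G/H}$ composed with the canonical isomorphism $\underline{I(V_H)}(G/H) = \Hom_{\Q(WH)}(\Q(WH), V_H) \cong V_H$; the inverse takes a $\Q(WH)$-map $f\colon \underline{M}(G/H) \to V_H$ and extends it by $\phi_{G/K}(x)(gK) = f\bigl(\underline{M}(\psi_{gK})(x)\bigr)$, where $\psi_{gK} \colon G/H \to G/K$ is the unique $G$-map sending $eH$ to $gK \in (G/K)^H$.

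Next I would invoke two standard facts. First, $\mathrm{ev}_H$ is exact because limits and colimits in the presheaf category $\cg^{\Q}$ are computed pointwise. Second, by Maschke's theorem, $\Q(WH)$ is semisimple, so every $\Q(WH)$-module is injective; in particular $V_H$ is injective and the functor $\Hom_{\Q(WH)}(-, V_H)$ is exact. Combining, $\Hom_{\cg^{\Q}}(-, \underline{I(V_H)}) \cong \Hom_{\Q(WH)}(\mathrm{ev}_H(-), V_H)$ is a composite of exact functors, hence exact, which is the definition of $\underline{I(V_H)}$ being injective in $\cg^{\Q}$.

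The main obstacle will be verifying the adjunction cleanly---especially the $\Q(WH)$-equivariance of the forward map (which hinges on how $WH$ acts on itself through $(G/H)^H = WH$ by translation) and the naturality in $G/K$ of the inverse. This is routine Yoneda-style bookkeeping once the variance conventions are fixed, after which injectivity is a purely formal consequence of semisimplicity.
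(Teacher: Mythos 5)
The paper states this lemma as a citation to Doman \cite{Doman88} and does not supply a proof, so there is no in-paper argument to compare against. That said, your proposal is correct and is the canonical argument for this kind of result: you identify $\underline{I(V_H)}$ as the value of the right Kan extension (coinduction) along the inclusion of the one-object full subcategory $\{G/H\}\subset\og$, equivalently as the right adjoint of the evaluation functor $\mathrm{ev}_H$, using the observation that $(G/K)^H$ is naturally $\og(G/H,G/K)$ with $\og(G/H,G/H)\cong WH$. Since $\mathrm{ev}_H$ is exact (pointwise limits and colimits in a functor category) and $\Q(WH)$ is semisimple by Maschke, $\Hom_{\cg^{\Q}}(-,\underline{I(V_H)})\cong\Hom_{\Q(WH)}(\mathrm{ev}_H(-),V_H)$ is exact, giving injectivity. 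The only care needed is the variance bookkeeping you flag: the contravariance of $\underline{M}$ and the anti-isomorphism $\og(G/H,G/H)\cong WH$ conspire so that $\underline{M}(G/H)$ acquires a genuine left $\Q(WH)$-module structure, and you should verify the $WH$-equivariance of the assignment $\phi_{G/K}(x)\colon gK\mapsto f(\underline{M}(\psi_{gK})(x))$ against the left $WH$-action $w\cdot gK=ngK$ on $(G/K)^H$; this does close up once the conventions are fixed, so the argument is sound as stated.
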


Moreover, the injective coefficient system $\underline{I(V_H)}$ could be used to construct an injective envelope for any Bredon coefficient $\underline{M}$ over $\Q$. Let $V_{\{e\}}=\underline{M}(G/\{e\})$. And in general, let $$V_H=\bigcap_{K<H}\Ker\underline{M}(f_{K,H})$$ where $f_{K,H}:G/K\rightarrow G/H$ is the projection in the canonical orbit category.

\begin{theorem}[\cite{Doman88}]
\label{injective envelope}
For any coefficient system $\underline{M}$ over $\Q$, the map $f:\underline{M}\rightarrow\oplus \underline{I(V_H)}$ is an injective envelope of $\underline{M}$, where the direct sum is over all conjugate classes of $G$.
\end{theorem}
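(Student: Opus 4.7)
The plan is to verify the three defining properties of an injective envelope: (a) the target $\bigoplus_H \underline{I(V_H)}$ is injective, (b) $f$ is a monomorphism, and (c) $f$ is essential, meaning every nonzero subobject of the target meets $f(\underline{M})$ nontrivially. Property (a) is immediate from Lemma~\ref{injecitveness} together with the fact that finite direct sums of injective objects in $\cg^{\Q}$ are injective, so all the real work is in (b) and especially (c).

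For the construction of $f$, I would first make explicit the adjunction that underlies Doman's definition: evaluation $\mathrm{ev}_H:\cg^{\Q}\to \Q(WH)\text{-Mod}$ is exact with right adjoint $V\mapsto \underline{I(V)}$, so a morphism $\underline{M}\to\underline{I(V_H)}$ is the same data as a $\Q(WH)$-linear map $\underline{M}(G/H)\to V_H$. Because $\Q(WH)$ is semisimple, the inclusion $V_H\hookrightarrow\underline{M}(G/H)$ admits a $\Q(WH)$-equivariant retraction $p_H$, and I take $f_H:\underline{M}\to\underline{I(V_H)}$ to be the morphism adjoint to $p_H$; then $f=\bigoplus_H f_H$.

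For (b), I would induct on $|K|$. The base $K=\{e\}$ is trivial since $V_{\{e\}}=\underline{M}(G/\{e\})$ and $p_{\{e\}}=\mathrm{id}$. For the inductive step, if $f(m)=0$ for $m\in\underline{M}(G/K)$ then in particular $p_K(m)=0$, so $m$ lies in the $\Q(WK)$-stable complement of $V_K$; by the very definition of $V_K$ that complement embeds via restriction into $\bigoplus_{K'<K}\underline{M}(G/K')$, so some $m'=\underline{M}(f_{K',K})(m)$ is nonzero whenever $m\neq 0$. A short naturality check shows $f(m')=0$ as well (the formula for $f_H$ involves precisely the composites $\phi_g$ that factor through the restriction $f_{K',K}$), contradicting the inductive hypothesis.

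The main obstacle is property (c). In the artinian category $\cg^{\Q}$, essentiality is equivalent to the containment $\mathrm{soc}\bigl(\bigoplus_H\underline{I(V_H)}\bigr)\subseteq f(\underline{M})$. I would first classify the simple subobjects of $\underline{I(V)}$ directly from the formula: $\underline{I(V)}(G/K)=0$ whenever $H$ is not conjugate to a subgroup of $K$, and for the remaining $K$ the structure maps down to $\underline{I(V)}(G/H)=V$ are pointwise evaluations on $(G/K)^H$, which jointly separate points. Hence any nonzero simple subobject must be supported at the single orbit $G/H$ with value a simple $\Q(WH)$-submodule of $V$. Since each $V_H$ is semisimple over $\Q(WH)$, this shows $\mathrm{soc}(\underline{I(V_H)})=V_H$ sitting at $G/H$, and the socle of the direct sum, restricted to $G/H$, is just $V_H$. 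To close the argument, for any $m\in V_H\subseteq\underline{M}(G/H)$ I would check that $f(m)$ equals the socle element $(0,\dots,m,\dots,0)$: its $H$-component is $p_H(m)=m$, while every other component vanishes because each $G$-map $\phi_g:G/H'\to G/H$ with $H'\not\sim H$ factors through restriction to a proper subgroup of $H$, and $V_H$ restricts to zero on proper subgroups by its very definition. Thus $f(\underline{M})$ contains the socle, proving essentiality.
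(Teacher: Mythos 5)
The paper does not prove this statement: it is cited directly from Doman (1988) with no argument given, and the map $f$ is not even constructed in the text. So there is no ``paper proof'' to compare against; what follows is an assessment of your argument on its own terms.

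Your reconstruction is essentially correct and well organized. The adjoint description of $\underline{I(V_H)}$ is the right way to pin down $f$, and your induction for monomorphicity is sound: if $p_K(m)=0$ then $m$ lies in a complement of $V_K=\bigcap_{K'<K}\Ker\underline{M}(f_{K',K})$, so some restriction $m'=\underline{M}(f_{K',K})(m)$ is nonzero, and naturality of $f$ forces $f(m')=0$, contradicting the inductive hypothesis at the smaller orbit. The socle computation is also correct: the support condition $\underline{I(V_H)}(G/K)\neq 0\Rightarrow H$ subconjugate to $K$, together with the fact that the restriction maps into $\underline{I(V_H)}(G/H)=V_H$ are evaluations on $(G/K)^H$ and hence jointly faithful, forces any simple subobject to be concentrated at $G/H$; and your verification that $f$ sends $V_H\subseteq\underline{M}(G/H)$ onto that socle piece is exactly right, since $V_H$ dies under all restrictions to proper subgroups by construction.

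The one place you cut a corner is the phrase ``in the artinian category $\cg^{\Q}$'': the category of rational coefficient systems is not artinian (objects may have infinite-dimensional values), so the reduction of essentiality to socle-containment is not automatic. Fortunately you do not actually need artinianness. What is needed is that the socle of $\bigoplus_H\underline{I(V_H)}$ is itself essential, and this follows from what you have already set up: if $\underline{N}$ is a nonzero subobject of $\underline{I(V_H)}$, pick $K$ with $\underline{N}(G/K)\neq 0$; joint separation of the restriction maps into $V_H$ forces $\underline{N}(G/H)\neq 0$; and since $\Q(WH)$ is semisimple, $\underline{N}(G/H)$ contains a simple $\Q(WH)$-submodule $W\subseteq V_H$, which generates a simple subobject $\underline{S}_{H,W}\subseteq\underline{N}\cap\mathrm{soc}$. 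Replacing the artinian citation with this short argument closes the only genuine gap; the rest stands.
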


\begin{corollary}
The global (injective) dimension of $\cg^\Q$ is less than $L-1$ where $L$ is the largest length of subgroup chains in $G$.
\end{corollary}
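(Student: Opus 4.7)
The plan is to argue by induction on the subgroup-chain length $L$, using Theorem~\ref{injective envelope} iteratively. Given any Bredon coefficient system $\underline{M}$ over $\Q$, I would form its injective envelope $\underline{M} \hookrightarrow \bigoplus_H \underline{I(V_H)}$, take the cokernel $\underline{M}^1$, and repeat, producing a minimal injective resolution
\[
0 \to \underline{M} \to \underline{I^0} \to \underline{I^1} \to \cdots.
\]
The goal is to show that this resolution terminates after a number of steps controlled by $L$.

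To track the induction, assign to each $\underline{M}$ its \emph{depth}: the length of the longest chain $H_0 < H_1 < \cdots < H_d$ of subgroups (taken up to conjugacy) for which the associated $V_{H_i}$, as defined just before Theorem~\ref{injective envelope}, is nonzero. The base case of depth zero forces $\underline{M} = 0$ by the faithfulness of the envelope map, so the resolution is trivial. For the inductive step, the key claim is that the cokernel $\underline{M}^1$ has strictly smaller depth than $\underline{M}$. Granting this, iterating at most $L - 1$ times exhausts the depth and yields the stated bound on the global injective dimension.

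The main obstacle is proving the strict depth drop. The heuristic is that $\underline{I(V_H)}(G/K) = \Hom_{\Q(WH)}(\Q((G/K)^H), V_H)$ is supported only at subgroups containing a conjugate of $H$, and the summand at a minimal $H$ in the support is an isomorphism on the $V_H$-piece of $\underline{M}(G/H)$. To make this rigorous I would trace how the envelope map interacts with the restriction maps $\underline{M}(f_{K,H})$, verify that it preserves the filtration by chain length, and use the semisimplicity of $\Q(WH)$ (via Maschke's theorem, valid since $|WH|$ is invertible in $\Q$) to split the relevant short exact sequences at each subgroup level. This ensures that no new $V_H$-contribution arises in the cokernel at any intermediate subgroup, so the depth genuinely drops by one at each iteration, closing the induction.
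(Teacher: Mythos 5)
The paper gives no proof of this corollary --- it is stated as a direct consequence of Theorem~\ref{injective envelope} --- so there is no paper argument to compare against; I will evaluate your sketch on its own.

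Your overall strategy (iterate Doman's envelope and show a well-chosen invariant decreases) is the right shape, but the ``depth'' you chose does not decrease. Take $G = C_2$ and $\underline{M}$ with $\underline{M}(G/\{e\}) = \Q$ (trivial Weyl action) and $\underline{M}(G/G) = 0$. Then $V_{\{e\}} = \Q$ and $V_G = 0$, so depth$(\underline{M}) = 0$: there is no chain of length two with nonzero $V$'s. The injective envelope is the constant system $\underline{\Q}$, and the cokernel $\underline{M}^1$ satisfies $\underline{M}^1(G/\{e\}) = 0$ and $\underline{M}^1(G/G) \cong \Q$, hence $V_{\{e\}}(\underline{M}^1) = 0$ while $V_G(\underline{M}^1) = \Q$: depth$(\underline{M}^1) = 0$ again. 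The depth does not drop, so the induction never closes. The same example also shows your base case is misstated: depth $0$ neither forces $\underline{M} = 0$ nor forces $\underline{M}$ to be injective --- the $\underline{M}$ above has injective dimension one.

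What actually behaves is not the length of chains inside the $V$-support but the \emph{position} of that support in the subgroup lattice. For $H$ minimal among subgroups with $\underline{M}(G/H)\neq 0$, all summands $\underline{I(V_K)}(G/H)$ with $K$ strictly subconjugate to $H$ vanish (since $V_K \subset \underline{M}(G/K) = 0$), so the envelope is an isomorphism at $G/H$ and the cokernel vanishes there. Thus the support of $\underline{M}^1$ sits strictly above that of $\underline{M}$, and the invariant controlling the resolution length is something like the length of the longest chain from a member of the support up to $G$. Reformulating your induction on that invariant, and verifying the strict drop by tracing the envelope through the restriction maps as you anticipate, is the remaining work; with the depth invariant as written the argument cannot get off the ground even for $G = C_2$.
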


The fixed point set theorem which we obtained in section 1 give us the access to $\underline{H_*}((Z(K;(X,A))))$, the coefficient system defined by the homology of fixed point sets. Meanwhile Doman's result provide a general method to find a injective resolution for a rational Bredon coefficient system.  Combine those, we have the both parts of the inputs of the universal coefficient spectral sequence for Bredon cohomology, at least for some examples. Let $K=\partial \Delta_3$, the the automorphism group of $K$ is $\Aut(K)=\Sigma_4$. The polyhedral product $Z(K;(D^1, S^0))$ is homeomorphic $S^3$ with a $\Sigma_4$-action. We will compute the Bredon cohomology of $Z(K;(D^1, S^0))$ for groups $\Sigma_4$ and $D_8$(as the Sylow 2-subgroup of $\Sigma_4$).

Let $\underline{\Q}$ be the constant $\Q$ coefficient system.

\begin{lemma}
The constant coefficient system $\underline{\Q}$ is injective.
\end{lemma}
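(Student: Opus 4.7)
My plan is to deduce injectivity of $\underline{\Q}$ directly from Doman's envelope construction (Theorem \ref{injective envelope}). Specifically, I would compute the modules $V_H$ associated with the constant coefficient system, show that the resulting direct sum $\oplus\,\underline{I(V_H)}$ collapses to $\underline{\Q}$ itself, and conclude that $\underline{\Q}$ is its own injective envelope, hence injective.

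The first step is to compute the $V_H$. Since $\underline{\Q}$ assigns $\Q$ to every orbit $G/H$ and sends every morphism in $\og$ to $\mathrm{id}_{\Q}$, each projection $f_{K,H}:G/K\to G/H$ with $K<H$ induces the identity $\underline{\Q}(f_{K,H}):\Q\to\Q$, which has trivial kernel. Therefore
$$V_H=\bigcap_{K<H}\Ker\underline{\Q}(f_{K,H})=0$$
whenever $H\neq\{e\}$, while $V_{\{e\}}=\underline{\Q}(G/\{e\})=\Q$, carrying the trivial $W\{e\}=G$-action (again because $\underline{\Q}$ sends the Weyl group generators to $\mathrm{id}_{\Q}$).

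Next I would evaluate $\underline{I(\Q)}$ on an arbitrary orbit $G/K$. By definition
$$\underline{I(\Q)}(G/K)=\Hom_{\Q G}\bigl(\Q((G/K)^{\{e\}}),\Q\bigr)=\Hom_{\Q G}(\Q(G/K),\Q)$$
with trivial $G$-action on the target. Since $\Q(G/K)$ is a transitive permutation $\Q G$-module, a $G$-equivariant map to a trivial module is determined by the common value on the orbit, so this hom space is canonically $\Q$. To identify $\underline{I(\Q)}$ with $\underline{\Q}$ as a functor, I would verify that for any $G$-map $f:G/K\to G/K'$ the induced pullback $\underline{I(\Q)}(f)$ is $\mathrm{id}_{\Q}$: precomposing a $G$-invariant constant functional with the equivariant map $\overline{f}:\Q(G/K)\to\Q(G/K')$ yields a $G$-invariant constant functional of the same value.

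Putting these together, Theorem \ref{injective envelope} identifies the injective envelope $\underline{\Q}\to\oplus\,\underline{I(V_H)}=\underline{I(\Q)}=\underline{\Q}$, so $\underline{\Q}$ is injective. I expect the only point requiring care to be the verification that $\underline{I(\Q)}$ is truly the constant system, rather than merely agreeing with it object by object, and in particular that the Weyl group loops (for example the generator $\beta$ of Figure \ref{fig:C2}) act as the identity on each $V_H$; both reduce to the fact that a constant functor sends every morphism to an identity, so this is a bookkeeping issue rather than a substantive obstacle.
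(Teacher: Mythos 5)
Your proof is correct and proceeds along essentially the same lines as the paper's: the heart of both arguments is the computation $\Hom_{\Q G}(\Q(G/K),\Q)=\Q$, which identifies $\underline{\Q}$ with Doman's injective $\underline{I(V_{\{e\}})}$. The paper stops there, invoking Lemma~\ref{injecitveness} directly, whereas you additionally verify that $V_H=0$ for all $H\neq\{e\}$ so that the envelope of Theorem~\ref{injective envelope} collapses to $\underline{\Q}$ itself---a slight detour, but sound, and your explicit check that the structure maps of $\underline{I(\Q)}$ are identities is a worthwhile point of care that the paper leaves implicit.
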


\begin{proof}
Take $H=\{e\}$, it follows that $WH=G$ and $(G/K)^H=G/K$. Then for trivial $WH$-module $V_H=\Q$, we have 
$$\Hom_{\Q(WH)}(\Q(G/K),V_H)=\Q.$$
Hence $\underline{I(V_H)}=\underline{\Q}$. 
\end{proof}

\begin{corollary}
The universal coefficient spectral sequence $$E_2^{p,q}=\Ext_{\cg^R}^{p,q}(\underline{H_*}(X), \underline{\Q})\Rightarrow H_G^n(X, \underline{\Q}),$$ collapses at $E_2$-page. Therefore $$H^n_{G}(X,\underline{\Q})= \Hom_{\cg^{\Q}}(\underline{H_n}(X), \underline{\Q}).$$
\end{corollary}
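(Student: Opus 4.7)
The plan is to deduce the collapse directly from the injectivity of $\underline{\Q}$ established in the preceding lemma, using only formal properties of derived functors and spectral sequences. The entire argument hinges on the single observation that the higher Ext groups into an injective object vanish, which immediately forces concentration of $E_2$ in one column.

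First, I would invoke the preceding lemma: $\underline{\Q}$ is an injective object of $\cg^\Q$. A standard consequence in any abelian category with enough injectives is that $\Ext^p_{\cg^\Q}(-,\underline{\Q})=0$ for every $p\geq 1$; one may simply use $\underline{\Q}\to\underline{\Q}\to 0$ as an injective resolution. Applying this with the first argument specialized to each graded piece $\underline{H_q}(X)$, I would conclude that the $E_2$-page of Theorem~\ref{universal coefficient spectral sequence}, with coefficient system $\underline{M}=\underline{\Q}$, vanishes for $p\geq 1$, so that the only potentially nonzero entries are
$$
E_2^{0,q}=\Hom_{\cg^{\Q}}(\underline{H_q}(X),\underline{\Q}).
$$

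Second, I would examine the differentials. The differential $d_r$ on the $E_r$-page is bigraded of type $(r,1-r)$ with $r\geq 2$, so either its source or its target lies in a column with $p\geq 1$. Since every such column vanishes by the previous step, every $d_r$ is forced to be zero, and therefore $E_\infty=E_2$; this is precisely the collapse at $E_2$.

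Finally, the filtration on $H^n_G(X,\underline{\Q})$ coming from the spectral sequence has only one nonzero associated graded quotient, namely $E_\infty^{0,n}=E_2^{0,n}=\Hom_{\cg^{\Q}}(\underline{H_n}(X),\underline{\Q})$, which yields the claimed identification. There is no serious obstacle: the corollary is purely a structural consequence of the injectivity of $\underline{\Q}$, and the only content to record is that the bigrading of the differentials precludes any nonzero $d_r$ once the $E_2$-page is concentrated in a single column.
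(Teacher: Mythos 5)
Your argument is correct and is exactly the reasoning the paper intends (the paper leaves the corollary unproved, treating it as an immediate consequence of the preceding lemma that $\underline{\Q}$ is injective): injectivity kills $\Ext^p$ for $p\geq 1$, the $E_2$-page is concentrated in the column $p=0$, the bidegree of the differentials then forces them all to vanish, and the one-step filtration gives the identification of $H^n_G$ with $\Hom_{\cg^{\Q}}(\underline{H_n}(X),\underline{\Q})$. Nothing is missing.
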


\subsection{Computation for $D_8$}

The standard presentation of the dihedral group of order 8 is given by 
$$G=D_8=\langle\sigma,\tau:\sigma^4=\tau^2=e, \tau\sigma\tau=\sigma^{-1}\rangle.$$
The (reduced) canonical orbit category is shown in the left diagram of Figure \ref{fig:D8}. Notice that the Weyl group actions are not drawn in the figure. $W\{e\}=G$, $W\langle \sigma^2\rangle=\Z_2\times Z_2$, $WG=0$ and all other subgroups has Weyl group $\Z_2$. For the convenience of later reference we name the objects in $\og$ as in the right diagram of Figure \ref{fig:D8}. 

\begin{figure}[h]
\centering
\begin{tikzpicture}[scale=0.7]
\filldraw[black] (0,0) circle (2pt) node[below] {$G/\{e\}$};
\filldraw[black] (0,3) circle (2pt) node[below right] {$G/\langle\sigma^2\rangle$};
\filldraw[black] (-3,3) circle (2pt) node[left] {$G/\langle\tau\sigma\rangle$};
\filldraw[black] (3,3) circle (2pt) node[right] {$G/\langle\tau\rangle$};
\filldraw[black] (0,6) circle (2pt) node[right] {$G/\langle\sigma\rangle$};
\filldraw[black] (-3,6) circle (2pt) node[left] {$G/\langle\tau\sigma,\sigma^2\rangle$};
\filldraw[black] (3,6) circle (2pt) node[right] {$G/\langle\tau, \sigma^2\rangle$};
\filldraw[black] (0,9) circle (2pt) node[above] {$G/G$};
\draw[ultra thick, ->](0,0.2)--(0,2.8);
\draw[ultra thick, ->](-0.2,0.2)--(-2.8,2.8);
\draw[ultra thick, ->](0.2,0.2)--(2.8,2.8);
\draw[ultra thick, ->](0,3.2)--(0,5.8);
\draw[ultra thick, ->](-3,3.2)--(-3,5.8);
\draw[ultra thick, ->](3,3.2)--(3,5.8);
\draw[ultra thick, ->](-0.2, 3.2)--(-2.8, 5.8);
\draw[ultra thick, ->](0.2,3.2)--(2.8,5.8);
\draw[ultra thick, ->](-2.8,6.2)--(-0.2,8.8);
\draw[ultra thick, ->](2.8,6.2)--(0.2,8.8);
\draw[ultra thick, ->](0,6.2)--(0,8.8);
\filldraw[black] (10,0) circle (2pt) node[below] {0};
\filldraw[black] (10,3) circle (2pt) node[below right] {2};
\filldraw[black] (7,3) circle (2pt) node[left] {1};
\filldraw[black] (13,3) circle (2pt) node[right] {3};
\filldraw[black] (10,6) circle (2pt) node[right] {5};
\filldraw[black] (7,6) circle (2pt) node[left] {4};
\filldraw[black] (13,6) circle (2pt) node[right] {6};
\filldraw[black] (10,9) circle (2pt) node[above] {7};
\draw[ultra thick, ->](10,0.2)--(10,2.8);
\draw[ultra thick, ->](9.8,0.2)--(7.2,2.8);
\draw[ultra thick, ->](10.2,0.2)--(12.8,2.8);
\draw[ultra thick, ->](10,3.2)--(10,5.8);
\draw[ultra thick, ->](7,3.2)--(7,5.8);
\draw[ultra thick, ->](13,3.2)--(13,5.8);
\draw[ultra thick, ->](9.8, 3.2)--(7.2, 5.8);
\draw[ultra thick, ->](10.2,3.2)--(12.8,5.8);
\draw[ultra thick, ->](7.2,6.2)--(9.8,8.8);
\draw[ultra thick, ->](12.8,6.2)--(10.2,8.8);
\draw[ultra thick, ->](10,6.2)--(10,8.8);
\end{tikzpicture}
\caption{$\mathscr{O}_{D8}$} 
\label{fig:D8}
\end{figure}

\begin{figure}[h]
\centering
\begin{tikzpicture}[scale=0.7]
\filldraw[black] (0,0) circle (2pt) node[below] {$S^3$};
\filldraw[black] (0,3) circle (2pt) node[below right] {$S^1$};
\filldraw[black] (-3,3) circle (2pt) node[left] {$S^2$};
\filldraw[black] (3,3) circle (2pt) node[right] {$S^1$};
\filldraw[black] (0,6) circle (2pt) node[right] {$S^0$};
\filldraw[black] (-3,6) circle (2pt) node[left] {$S^1$};
\filldraw[black] (3,6) circle (2pt) node[right] {$S^0$};
\filldraw[black] (0,9) circle (2pt) node[above] {$S^0$};
\draw[ultra thick, <-](0,0.2)--(0,2.8);
\draw[ultra thick, <-](-0.2,0.2)--(-2.8,2.8);
\draw[ultra thick, <-](0.2,0.2)--(2.8,2.8);
\draw[ultra thick, <-](0,3.2)--(0,5.8);
\draw[ultra thick, <-](-3,3.2)--(-3,5.8);
\draw[ultra thick, <-](3,3.2)--(3,5.8);
\draw[ultra thick, <-](-0.2, 3.2)--(-2.8, 5.8);
\draw[ultra thick, <-](0.2,3.2)--(2.8,5.8);
\draw[ultra thick, <-](-2.8,6.2)--(-0.2,8.8);
\draw[ultra thick, <-](2.8,6.2)--(0.2,8.8);
\draw[ultra thick, <-](0,6.2)--(0,8.8);
\end{tikzpicture}
\caption{Fixed point set of $S^3$ with $D_8$-action} 
\label{fig:S3D8}
\end{figure}
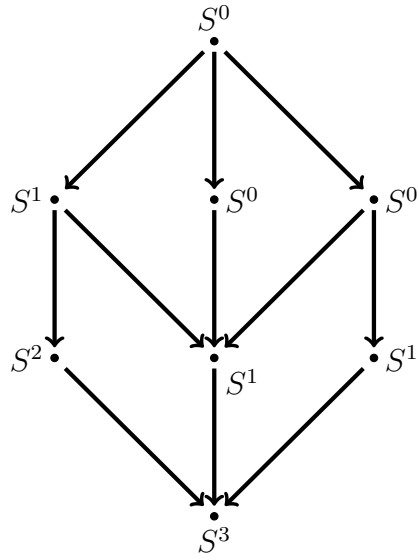
By Corollary \ref{sphere fixed point} and the argument before it, the fixed points of $S^3$ for any subgoup $H<D_8$ is given by Figure \ref{fig:S3D8}. We need to compute  the $\Q$-dimension of the homomorphism groups between coefficient systems in the category $\cg^{\Q}$. Notice that a homomorphism between two coefficient systems $\underline{M}$ and $\underline{N}$ is a collection of $\Q$-linear maps $$f_i: \underline{M}(i)\rightarrow \underline{N}(i)$$
that makes the full diagram commute, like in Figure \ref{D_8H_0Q}. The following  is a trivial but handy lemma in the computation the homomorphism groups.

\begin{lemma}
\label{handy}
Let $M$ and $N$ be two $Q$-spaces, if we have the following commutative diagram 
\begin{figure}[H]
\centering
\begin{tikzpicture}[scale=1]
\node[] at (0,0) {$N$};
\node[] at (0,2) {$M$};
\node[] at (2,0) {$S$};
\node[] at (2,2) {$R$};
\node[above] at (1,0) {$g$};
\node[above] at (1,2) {$f$};
\node[left] at (0,1) {$h$};
\node[right] at (2,1) {$k$};
\draw[ultra thick, ->](0,1.7)--(0,0.3);
\draw[ultra thick, ->](2,1.7)--(2,0.3);
\draw[ultra thick, ->](0.3,0)--(1.7,0);
\draw[ultra thick, ->](0.3,2)--(1.7,2);
\end{tikzpicture}
\end{figure}
Then we have 
\begin{enumerate}[label=\roman*)]
\item
The map $f=0$, if $g=0$ and $k$ is an isomorphsim.
\item
The map $f$ is uniquely determined by $g$ if $h$ and $k$ are isomorphisms.
\item
The map $f$ is uniquely determined by $h$ if $g$ and $k$ are isomorphisms. 
\end{enumerate}
\hfill $\square$
\end{lemma}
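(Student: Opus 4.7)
The plan is to extract the one nontrivial fact that the commutative square provides, namely the identity $k \circ f = g \circ h$, and then read off all three conclusions by elementary manipulation in the category of $\Q$-vector spaces. Since the only hypothesis available in each case is invertibility or vanishing of some arrows, the argument is essentially algebraic cancellation.

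First I would dispose of (i): if $g = 0$, then commutativity gives $k \circ f = g \circ h = 0$, and composing on the left with the two-sided inverse $k^{-1}$ yields $f = 0$. For (ii), the same identity rearranges as $f = k^{-1} \circ g \circ h$, a perfectly explicit formula exhibiting $f$ as a function of $g$ alone (since $h$ and $k$ are fixed isomorphisms); uniqueness is automatic because the equation $k \circ f = g \circ h$ has a unique solution in $f$ as soon as $k$ is invertible. Part (iii) is the symmetric statement: with $g$ and $k$ fixed isomorphisms the same formula $f = k^{-1} \circ g \circ h$ now displays $f$ as uniquely determined by $h$.

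The main obstacle: there genuinely isn't one, as the statement is a tautology about commutative squares in any abelian category, and the category of $\Q$-vector spaces is as benign as one could wish. The only care required is interpreting the phrase ``uniquely determined by'' in (ii) and (iii): it means that, holding the other arrows of the square fixed with the stipulated properties, the remaining map is forced by the specified one via the explicit formula above. This is precisely the feature that will be exploited later when computing $\Hom_{\cg^{\Q}}$ between coefficient systems, since in the relevant diagrams many comparison maps between fixed-point homology groups are isomorphisms of one-dimensional $\Q$-vector spaces, and each such square then forces one vertex map from another, drastically cutting the number of free parameters in a morphism of coefficient systems.
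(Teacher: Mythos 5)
Your proof is correct, and it is the only reasonable one: the paper itself provides no proof text for this lemma (it closes it with a hollow square, treating it as immediate), so there is nothing to compare against beyond the obvious manipulation. Your identity $k\circ f = g\circ h$ and the ensuing formula $f = k^{-1}\circ g\circ h$ handle all three parts exactly as intended, and your reading of ``uniquely determined by'' matches how the lemma is actually used downstream.
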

Firstly, let us compute $$H^0_{D_8}(Z(K;(D^1, S^0)),\underline{\Q})= \Hom_{\cg^{\Q}}(\underline{H_0}(Z(K;(D^1, S^0)), \underline{\Q}).$$ The commutative diagram is shown in Figure \ref{D_8H_0Q}.
\begin{figure}[h]
\centering
\begin{tikzpicture}[scale=0.7]
\filldraw[black] (0,0) circle (2pt) node[below] {$\Q$};
\filldraw[black] (0,3) circle (2pt) node[below right] {$\Q$};
\filldraw[black] (-3,3) circle (2pt) node[left] {$\Q$};
\filldraw[black] (3,3) circle (2pt) node[right] {$\Q$};
\filldraw[black] (0,6) circle (2pt) node[right] {$\Q^2$};
\filldraw[black] (-3,6) circle (2pt) node[left] {$\Q$};
\filldraw[black] (3,6) circle (2pt) node[right] {$\Q^2$};
\filldraw[black] (0,9) circle (2pt) node[above] {$\Q^2$};
\draw[ultra thick, <-](0,0.2)--(0,2.8);
\draw[ultra thick, <-](-0.2,0.2)--(-2.8,2.8);
\draw[ultra thick, <-](0.2,0.2)--(2.8,2.8);
\draw[ultra thick, <-](0,3.2)--(0,5.8);
\draw[ultra thick, <-](-3,3.2)--(-3,5.8);
\draw[ultra thick, <-](3,3.2)--(3,5.8);
\draw[ultra thick, <-](-0.2, 3.2)--(-2.8, 5.8);
\draw[ultra thick, <-](0.2,3.2)--(2.8,5.8);
\draw[ultra thick, <-](-2.8,6.2)--(-0.2,8.8);
\draw[ultra thick, <-](2.8,6.2)--(0.2,8.8);
\draw[ultra thick, <-](0,6.2)--(0,8.8);
\filldraw[black] (10,0) circle (2pt) node[below] {$\Q$};
\filldraw[black] (10,3) circle (2pt) node[below right] {$\Q$};
\filldraw[black] (7,3) circle (2pt) node[left] {$\Q$};
\filldraw[black] (13,3) circle (2pt) node[right] {$\Q$};
\filldraw[black] (10,6) circle (2pt) node[right] {$\Q$};
\filldraw[black] (7,6) circle (2pt) node[left] {$\Q$};
\filldraw[black] (13,6) circle (2pt) node[right] {$\Q$};
\filldraw[black] (10,9) circle (2pt) node[above] {$\Q$};
\draw[ultra thick, <-](10,0.2)--(10,2.8);
\draw[ultra thick, <-](9.8,0.2)--(7.2,2.8);
\draw[ultra thick, <-](10.2,0.2)--(12.8,2.8);
\draw[ultra thick, <-](10,3.2)--(10,5.8);
\draw[ultra thick, <-](7,3.2)--(7,5.8);
\draw[ultra thick, <-](13,3.2)--(13,5.8);
\draw[ultra thick, <-](9.8, 3.2)--(7.2, 5.8);
\draw[ultra thick, <-](10.2,3.2)--(12.8,5.8);
\draw[ultra thick, <-](7.2,6.2)--(9.8,8.8);
\draw[ultra thick, <-](12.8,6.2)--(10.2,8.8);
\draw[ultra thick, <-](10,6.2)--(10,8.8);
\draw [ultra thick, ->,red] (0,0.2) to [out=10,in=170] (10,0.2);
\draw [ultra thick, ->,red] (0,2.8) to [out=-10,in=-170] (10,2.8);
\draw [ultra thick, ->,red] (0,5.8) to [out=-10,in=-170] (10,5.8);
\draw [ultra thick, ->,red] (0,9.2) to [out=10,in=170] (10,9.2);
\draw [ultra thick, ->,red] (-3,3.2) to [out=10,in=170] (7,3.2);
\draw [ultra thick, ->,red] (3,3.2) to [out=10,in=170] (13,3.2);
\draw [ultra thick, ->,red] (-3,6.2) to [out=10,in=170] (7,6.2);
\draw [ultra thick, ->,red] (3,6.2) to [out=10,in=170] (13,6.2);
\node[red] at (5,0.3) {$f_0$};
\node[red] at (2,4.2) {$f_1$};
\node[red] at (5,2.7) {$f_2$};
\node[red] at (8,4.2) {$f_3$};
\node[red] at (1,7.2) {$f_4$};
\node[red] at (5,5.7) {$f_5$};
\node[red] at (9,7.2) {$f_6$};
\node[red] at (5,9.3) {$f_7$};
\end{tikzpicture}
\caption{$\Hom_{\cg^{\Q}}(\underline{H_0}(Z(K;(D^1, S^0)), \underline{\Q})$} 
\label{D_8H_0Q}
\end{figure} 
\begin{enumerate}
\item
Starting from the map $f_0$, it could be any $r\in \Q$;
\item
Then by Lemma \ref{handy}, $f_1$, $f_2$ and $f_3$ are uniquely determined by $f_0$;
\begin{figure}[H]
\centering
\begin{tikzpicture}[scale=1]
\node[] at (0,0) {$\Q$};
\node[] at (0,2) {$\Q$};
\node[] at (2,0) {$\Q$};
\node[] at (2,2) {$\Q$};
\node[above] at (1,0) {$f_0=r$};
\node[above] at (1,2) {$f_1$};
\node[left] at (0,1) {$1$};
\node[right] at (2,1) {$1$};
\draw[ultra thick, ->](0,1.7)--(0,0.3);
\draw[ultra thick, ->](2,1.7)--(2,0.3);
\draw[ultra thick, ->](0.3,0)--(1.7,0);
\draw[ultra thick, ->](0.3,2)--(1.7,2);
\end{tikzpicture}
\end{figure}
\item
The map $f_4$ is uniquely determined by $f_2$;
\begin{figure}[H]
\centering
\begin{tikzpicture}[scale=1]
\node[] at (0,0) {$\Q$};
\node[] at (0,2) {$\Q$};
\node[] at (2,0) {$\Q$};
\node[] at (2,2) {$\Q$};
\node[above] at (1,0) {$f_2$};
\node[above] at (1,2) {$f_4$};
\node[left] at (0,1) {$1$};
\node[right] at (2,1) {$1$};
\draw[ultra thick, ->](0,1.7)--(0,0.3);
\draw[ultra thick, ->](2,1.7)--(2,0.3);
\draw[ultra thick, ->](0.3,0)--(1.7,0);
\draw[ultra thick, ->](0.3,2)--(1.7,2);
\end{tikzpicture}
\end{figure}
\item
The maps $f_5$ and $f_6$ are uniquely determined by $f_2$ if $f_2=f_0=r\neq 0$;
\begin{figure}[H]
\centering
\begin{tikzpicture}[scale=1]
\node[] at (0,0) {$\Q$};
\node[] at (0,2) {$\Q^2$};
\node[] at (2,0) {$\Q$};
\node[] at (2,2) {$\Q$};
\node[above] at (1,0) {$f_2=r$};
\node[above] at (1,2) {$f_5$};
\node[left] at (0,1) {$(1,-1)$};
\node[right] at (2,1) {$1$};
\draw[ultra thick, ->](0,1.7)--(0,0.3);
\draw[ultra thick, ->](2,1.7)--(2,0.3);
\draw[ultra thick, ->](0.3,0)--(1.7,0);
\draw[ultra thick, ->](0.3,2)--(1.7,2);
\end{tikzpicture}
\end{figure}
\item
We have $f_5=(s,s)$ and $f_6=(t,t)$ if $f_2=r=0$ for some $s,t\in \Q$;
\begin{figure}[H]
\centering
\begin{tikzpicture}[scale=1]
\node[] at (0,0) {$\Q$};
\node[] at (0,2) {$\Q^2$};
\node[] at (2,0) {$\Q$};
\node[] at (2,2) {$\Q$};
\node[above] at (1,0) {$f_2=0$};
\node[above] at (1,2) {$f_5=(s,s)$};
\node[left] at (0,1) {$(1,-1)$};
\node[right] at (2,1) {$1$};
\draw[ultra thick, ->](0,1.7)--(0,0.3);
\draw[ultra thick, ->](2,1.7)--(2,0.3);
\draw[ultra thick, ->](0.3,0)--(1.7,0);
\draw[ultra thick, ->](0.3,2)--(1.7,2);
\end{tikzpicture}
\end{figure}
\item
The map $f_7$ is uniquely determined by $f_5$. similarly, it is also uniquely determined by $f_6$. Hence we have $s=t$.
\begin{figure}[H]
\centering
\begin{tikzpicture}[scale=1]
\node[] at (0,0) {$\Q^2$};
\node[] at (0,2) {$\Q^2$};
\node[] at (2,0) {$\Q$};
\node[] at (2,2) {$\Q^2$};
\node[above] at (1,0) {$f_5,f_6$};
\node[above] at (1,2) {$f_7$};
\node[left] at (0,1) {Id};
\node[right] at (2,1) {$1$};
\draw[ultra thick, ->](0,1.7)--(0,0.3);
\draw[ultra thick, ->](2,1.7)--(2,0.3);
\draw[ultra thick, ->](0.3,0)--(1.7,0);
\draw[ultra thick, ->](0.3,2)--(1.7,2);
\end{tikzpicture}
\end{figure}
\item
In summary, there are two free variables. $$\Hom_{\cg^{\Q}}(\underline{H_0}(Z(K;(D^1, S^0)), \underline{\Q})=\Q^2.$$
\end{enumerate}

Secondly, we move on to compute $$H^1_{D_8}(Z(K;(D^1, S^0)),\underline{\Q})= \Hom_{\cg^{\Q}}(\underline{H_1}(Z(K;(D^1, S^0)), \underline{\Q}).$$ The commutative diagram is shown in Figure \ref{D_8H_1Q}.
\begin{figure}[h]
\centering
\begin{tikzpicture}[scale=0.7]
\filldraw[black] (0,0) circle (2pt) node[below] {$0$};
\filldraw[black] (0,3) circle (2pt) node[below right] {$\Q$};
\filldraw[black] (-3,3) circle (2pt) node[left] {$0$};
\filldraw[black] (3,3) circle (2pt) node[right] {$\Q$};
\filldraw[black] (0,6) circle (2pt) node[right] {$0$};
\filldraw[black] (-3,6) circle (2pt) node[left] {$\Q$};
\filldraw[black] (3,6) circle (2pt) node[right] {$0$};
\filldraw[black] (0,9) circle (2pt) node[above] {$0$};
\draw[ultra thick, <-](0,0.2)--(0,2.8);
\draw[ultra thick, <-](-0.2,0.2)--(-2.8,2.8);
\draw[ultra thick, <-](0.2,0.2)--(2.8,2.8);
\draw[ultra thick, <-](0,3.2)--(0,5.8);
\draw[ultra thick, <-](-3,3.2)--(-3,5.8);
\draw[ultra thick, <-](3,3.2)--(3,5.8);
\draw[ultra thick, <-](-0.2, 3.2)--(-2.8, 5.8);
\draw[ultra thick, <-](0.2,3.2)--(2.8,5.8);
\draw[ultra thick, <-](-2.8,6.2)--(-0.2,8.8);
\draw[ultra thick, <-](2.8,6.2)--(0.2,8.8);
\draw[ultra thick, <-](0,6.2)--(0,8.8);
\filldraw[black] (10,0) circle (2pt) node[below] {$\Q$};
\filldraw[black] (10,3) circle (2pt) node[below right] {$\Q$};
\filldraw[black] (7,3) circle (2pt) node[left] {$\Q$};
\filldraw[black] (13,3) circle (2pt) node[right] {$\Q$};
\filldraw[black] (10,6) circle (2pt) node[right] {$\Q$};
\filldraw[black] (7,6) circle (2pt) node[left] {$\Q$};
\filldraw[black] (13,6) circle (2pt) node[right] {$\Q$};
\filldraw[black] (10,9) circle (2pt) node[above] {$\Q$};
\draw[ultra thick, <-](10,0.2)--(10,2.8);
\draw[ultra thick, <-](9.8,0.2)--(7.2,2.8);
\draw[ultra thick, <-](10.2,0.2)--(12.8,2.8);
\draw[ultra thick, <-](10,3.2)--(10,5.8);
\draw[ultra thick, <-](7,3.2)--(7,5.8);
\draw[ultra thick, <-](13,3.2)--(13,5.8);
\draw[ultra thick, <-](9.8, 3.2)--(7.2, 5.8);
\draw[ultra thick, <-](10.2,3.2)--(12.8,5.8);
\draw[ultra thick, <-](7.2,6.2)--(9.8,8.8);
\draw[ultra thick, <-](12.8,6.2)--(10.2,8.8);
\draw[ultra thick, <-](10,6.2)--(10,8.8);
\draw [ultra thick, ->,red] (0,2.8) to [out=-10,in=-170] (10,2.8);
\draw [ultra thick, ->,red] (3,3.2) to [out=10,in=170] (13,3.2);
\draw [ultra thick, ->,red] (-3,6.2) to [out=10,in=170] (7,6.2);
\node[red] at (5,2.7) {$f_2$};
\node[red] at (8,4.2) {$f_3$};
\node[red] at (1,7.2) {$f_4$};
\end{tikzpicture}
\caption{$\Hom_{\cg^{\Q}}(\underline{H_1}(Z(K;(D^1, S^0)), \underline{\Q})$} 
\label{D_8H_1Q}
\end{figure} 
The possibly nonzero maps are $f_2$, $f_3$ and $f_4$. However, by Lemma \ref{handy}, $f_2=f_3=f_4=0$. 
\begin{figure}[H]
\centering
\begin{tikzpicture}[scale=1]
\node[] at (0,0) {$0$};
\node[] at (0,2) {$\Q$};
\node[] at (2,0) {$\Q$};
\node[] at (2,2) {$\Q$};
\node[above] at (1,2) {$f_2$};
\node[right] at (2,1) {$1$};
\draw[ultra thick, ->](0,1.7)--(0,0.3);
\draw[ultra thick, ->](2,1.7)--(2,0.3);
\draw[ultra thick, ->](0.3,0)--(1.7,0);
\draw[ultra thick, ->](0.3,2)--(1.7,2);
\end{tikzpicture}
\end{figure}
We have 
$$\Hom_{\cg^{\Q}}(\underline{H_1}(Z(K;(D^1, S^0)), \underline{\Q})=0.$$

Thirdly, let us compute $$H^2_{D_8}(Z(K;(D^1, S^0)),\underline{\Q})= \Hom_{\cg^{\Q}}(\underline{H_2}(Z(K;(D^1, S^0)), \underline{\Q}).$$ The commutative diagram is shown in Figure \ref{D_8H_2Q}.
\begin{figure}[h]
\centering
\begin{tikzpicture}[scale=0.7]
\filldraw[black] (0,0) circle (2pt) node[below] {$0$};
\filldraw[black] (0,3) circle (2pt) node[below right] {$0$};
\filldraw[black] (-3,3) circle (2pt) node[left] {$\Q$};
\filldraw[black] (3,3) circle (2pt) node[right] {$0$};
\filldraw[black] (0,6) circle (2pt) node[right] {$0$};
\filldraw[black] (-3,6) circle (2pt) node[left] {$0$};
\filldraw[black] (3,6) circle (2pt) node[right] {$0$};
\filldraw[black] (0,9) circle (2pt) node[above] {$0$};
\draw[ultra thick, <-](0,0.2)--(0,2.8);
\draw[ultra thick, <-](-0.2,0.2)--(-2.8,2.8);
\draw[ultra thick, <-](0.2,0.2)--(2.8,2.8);
\draw[ultra thick, <-](0,3.2)--(0,5.8);
\draw[ultra thick, <-](-3,3.2)--(-3,5.8);
\draw[ultra thick, <-](3,3.2)--(3,5.8);
\draw[ultra thick, <-](-0.2, 3.2)--(-2.8, 5.8);
\draw[ultra thick, <-](0.2,3.2)--(2.8,5.8);
\draw[ultra thick, <-](-2.8,6.2)--(-0.2,8.8);
\draw[ultra thick, <-](2.8,6.2)--(0.2,8.8);
\draw[ultra thick, <-](0,6.2)--(0,8.8);
\filldraw[black] (10,0) circle (2pt) node[below] {$\Q$};
\filldraw[black] (10,3) circle (2pt) node[below right] {$\Q$};
\filldraw[black] (7,3) circle (2pt) node[left] {$\Q$};
\filldraw[black] (13,3) circle (2pt) node[right] {$\Q$};
\filldraw[black] (10,6) circle (2pt) node[right] {$\Q$};
\filldraw[black] (7,6) circle (2pt) node[left] {$\Q$};
\filldraw[black] (13,6) circle (2pt) node[right] {$\Q$};
\filldraw[black] (10,9) circle (2pt) node[above] {$\Q$};
\draw[ultra thick, <-](10,0.2)--(10,2.8);
\draw[ultra thick, <-](9.8,0.2)--(7.2,2.8);
\draw[ultra thick, <-](10.2,0.2)--(12.8,2.8);
\draw[ultra thick, <-](10,3.2)--(10,5.8);
\draw[ultra thick, <-](7,3.2)--(7,5.8);
\draw[ultra thick, <-](13,3.2)--(13,5.8);
\draw[ultra thick, <-](9.8, 3.2)--(7.2, 5.8);
\draw[ultra thick, <-](10.2,3.2)--(12.8,5.8);
\draw[ultra thick, <-](7.2,6.2)--(9.8,8.8);
\draw[ultra thick, <-](12.8,6.2)--(10.2,8.8);
\draw[ultra thick, <-](10,6.2)--(10,8.8);
\draw [ultra thick, ->,red] (-3,3.2) to [out=10,in=170] (7,3.2);
\node[red] at (2,4.2) {$f_1$};

\end{tikzpicture}
\caption{$\Hom_{\cg^{\Q}}(\underline{H_2}(Z(K;(D^1, S^0)), \underline{\Q})$} 
\label{D_8H_2Q}
\end{figure}  

Again, by Lemma \ref{handy}, $f_1=0$.

Finally, for $$H^3_{D_8}(Z(K;(D^1, S^0)),\underline{\Q})= \Hom_{\cg^{\Q}}(\underline{H_3}(Z(K;(D^1, S^0)), \underline{\Q})=\Q.$$ Since the map $f_0:\Q\rightarrow \Q$ could be any rational number. The commutative diagram is shown in Figure \ref{D_8H_3Q}.
\begin{figure}[h]
\centering
\begin{tikzpicture}[scale=0.7]
\filldraw[black] (0,0) circle (2pt) node[below] {$\Q$};
\filldraw[black] (0,3) circle (2pt) node[below right] {$0$};
\filldraw[black] (-3,3) circle (2pt) node[left] {$0$};
\filldraw[black] (3,3) circle (2pt) node[right] {$0$};
\filldraw[black] (0,6) circle (2pt) node[right] {$0$};
\filldraw[black] (-3,6) circle (2pt) node[left] {$0$};
\filldraw[black] (3,6) circle (2pt) node[right] {$0$};
\filldraw[black] (0,9) circle (2pt) node[above] {$0$};
\draw[ultra thick, <-](0,0.2)--(0,2.8);
\draw[ultra thick, <-](-0.2,0.2)--(-2.8,2.8);
\draw[ultra thick, <-](0.2,0.2)--(2.8,2.8);
\draw[ultra thick, <-](0,3.2)--(0,5.8);
\draw[ultra thick, <-](-3,3.2)--(-3,5.8);
\draw[ultra thick, <-](3,3.2)--(3,5.8);
\draw[ultra thick, <-](-0.2, 3.2)--(-2.8, 5.8);
\draw[ultra thick, <-](0.2,3.2)--(2.8,5.8);
\draw[ultra thick, <-](-2.8,6.2)--(-0.2,8.8);
\draw[ultra thick, <-](2.8,6.2)--(0.2,8.8);
\draw[ultra thick, <-](0,6.2)--(0,8.8);
\filldraw[black] (10,0) circle (2pt) node[below] {$\Q$};
\filldraw[black] (10,3) circle (2pt) node[below right] {$\Q$};
\filldraw[black] (7,3) circle (2pt) node[left] {$\Q$};
\filldraw[black] (13,3) circle (2pt) node[right] {$\Q$};
\filldraw[black] (10,6) circle (2pt) node[right] {$\Q$};
\filldraw[black] (7,6) circle (2pt) node[left] {$\Q$};
\filldraw[black] (13,6) circle (2pt) node[right] {$\Q$};
\filldraw[black] (10,9) circle (2pt) node[above] {$\Q$};
\draw[ultra thick, <-](10,0.2)--(10,2.8);
\draw[ultra thick, <-](9.8,0.2)--(7.2,2.8);
\draw[ultra thick, <-](10.2,0.2)--(12.8,2.8);
\draw[ultra thick, <-](10,3.2)--(10,5.8);
\draw[ultra thick, <-](7,3.2)--(7,5.8);
\draw[ultra thick, <-](13,3.2)--(13,5.8);
\draw[ultra thick, <-](9.8, 3.2)--(7.2, 5.8);
\draw[ultra thick, <-](10.2,3.2)--(12.8,5.8);
\draw[ultra thick, <-](7.2,6.2)--(9.8,8.8);
\draw[ultra thick, <-](12.8,6.2)--(10.2,8.8);
\draw[ultra thick, <-](10,6.2)--(10,8.8);
\draw [ultra thick, ->,red] (0,0.2) to [out=10,in=170] (10,0.2);
\node[red] at (5,0.3) {$f_0$};
\end{tikzpicture}
\caption{$\Hom_{\cg^{\Q}}(\underline{H_3}(Z(K;(D^1, S^0)), \underline{\Q})$} 
\label{D_8H_3Q}
\end{figure} 

\subsection{Computation for $G=\Sigma_4$}

Now let us consider the full automorphism group $\Sigma_4=\Aut(\partial \Delta^3)$. It is the symmetric group of degree 4. The computation for Bredon cohomology is almost the same but with one more layer of subgroups. The canonical orbit category is shown in the left diagram in Figure \ref{fig:S4}. $W0=G$, $W\langle \sigma^2\rangle=\Z_2\times Z_2$, $WG=WD_8=W\sigma_3=0$ and all other subgroups has Weyl group $\Z_2$. Again, for the convenience of future reference, we name the objects in $\og$ as in the right diagram of Figure \ref{fig:S4}.

\begin{figure}[h]
\centering
\begin{tikzpicture}[scale=0.5]
\filldraw[black] (0,0) circle (2pt) node[below] {$G/\{e\}$};
\filldraw[black] (-2,3) circle (2pt) node[right] {$G/\langle\sigma^2\rangle$};
\filldraw[black] (-5,3) circle (2pt) node[left] {$G/\langle\tau\sigma\rangle$};
\filldraw[black] (2,3) circle (2pt) node[right] {$G/\langle\tau\rangle$};
\filldraw[black] (-2,6) circle (2pt) node[right] {$G/\langle\sigma\rangle$};
\filldraw[black] (-5,6) circle (2pt) node[left] {$G/\langle\tau\sigma,\sigma^2\rangle$};
\filldraw[black] (2,6) circle (2pt) node[right] {$G/\langle\tau, \sigma^2\rangle$};
\filldraw[black] (-2,9) circle (2pt) node[above left] {$G/D_8$};
\filldraw[black] (1,10) circle (2pt) node[right] {$G/A_4$};
\filldraw[black] (0,12) circle (2pt) node[above] {$G/G$};
\filldraw[black] (5,4) circle (2pt) node[right] {$G/\langle(123)\rangle$};
\filldraw[black] (5,8) circle (2pt) node[right] {$G/\Sigma_3$};
\draw[ultra thick, ->](-0.1,0.2)--(-1.9,2.8);
\draw[ultra thick, ->](-0.3,0.2)--(-4.8,2.8);
\draw[ultra thick, ->](0.1,0.2)--(1.9,2.8);
\draw[ultra thick, ->](0.3,0.2)--(4.8,3.8);
\draw[ultra thick, ->](-5,3.2)--(-5,5.8);
\draw[ultra thick, ->](-2,3.2)--(-2,5.8);
\draw[ultra thick, ->](-1.8,3.2)--(1.8,5.8);
\draw[ultra thick, ->](-2.2,3.2)--(-4.8,5.8);
\draw[ultra thick, ->](2,3.2)--(2,5.8);
\draw[ultra thick, ->](5,4.2)--(5, 7.8);
\draw[ultra thick, ->](-4.8,6.2)--(-2.2,8.8);
\draw[ultra thick, ->](-2,6.2)--(-2,8.8);
\draw[ultra thick, ->](1.8,6.2)--(-1.8,8.8);
\draw[ultra thick, ->](2,6.2)--(1,9.8);
\draw[ultra thick, ->](-2,9.2)--(-0.2,11.8);
\draw[ultra thick, ->](0.9,10.1)--(0.1,11.8);
\draw[ultra thick, ->](4.8,8.2)--(0.2,12);
\filldraw[black] (14,0) circle (2pt) node[below] {0};
\filldraw[black] (12,3) circle (2pt) node[below left] {2};
\filldraw[black] (9,3) circle (2pt) node[left] {1};
\filldraw[black] (16,3) circle (2pt) node[right] {3};
\filldraw[black] (12,6) circle (2pt) node[right] {5};
\filldraw[black] (9,6) circle (2pt) node[left] {4};
\filldraw[black] (16,6) circle (2pt) node[right] {6};
\filldraw[black] (12,9) circle (2pt) node[above left] {7};
\filldraw[black] (15,10) circle (2pt) node[right] {10};
\filldraw[black] (14,12) circle (2pt) node[above] {11};
\filldraw[black] (19,4) circle (2pt) node[right] {8};
\filldraw[black] (19,8) circle (2pt) node[right] {9};
\draw[ultra thick, ->](13.9,0.2)--(12.1,2.8);
\draw[ultra thick, ->](13.7,0.2)--(9.2,2.8);
\draw[ultra thick, ->](14.1,0.2)--(15.9,2.8);
\draw[ultra thick, ->](14.3,0.2)--(18.8,3.8);
\draw[ultra thick, ->](9,3.2)--(9,5.8);
\draw[ultra thick, ->](12,3.2)--(12,5.8);
\draw[ultra thick, ->](12.2,3.2)--(15.8,5.8);
\draw[ultra thick, ->](11.8,3.2)--(9.2,5.8);
\draw[ultra thick, ->](16,3.2)--(16,5.8);
\draw[ultra thick, ->](19,4.2)--(19, 7.8);
\draw[ultra thick, ->](9.2,6.2)--(11.8,8.8);
\draw[ultra thick, ->](12,6.2)--(12,8.8);
\draw[ultra thick, ->](15.8,6.2)--(12.2,8.8);
\draw[ultra thick, ->](16,6.2)--(15,9.8);
\draw[ultra thick, ->](12,9.2)--(13.8,11.8);
\draw[ultra thick, ->](14.9,10.1)--(14.1,11.8);
\draw[ultra thick, ->](18.8,8.2)--(14.2,12);
\end{tikzpicture}
\caption{$\mathscr{C}_{\Sigma_4}$} 
\label{fig:S4}
\end{figure}

Similar to $D_8$ case, we have the fixed point data for any subgroup of $G$, shown in Figure \ref{fig:S3S4}.

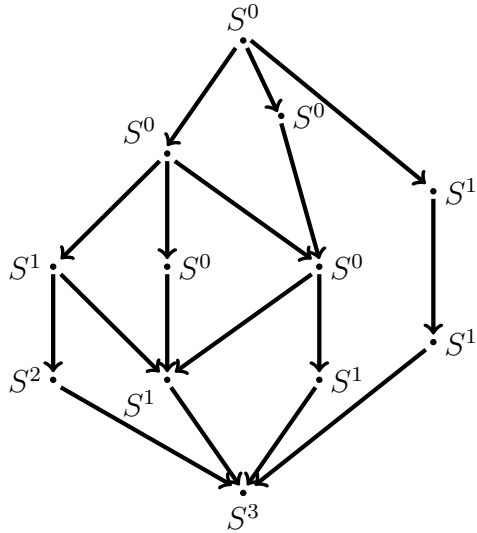
\begin{figure}[h]
\centering
\begin{tikzpicture}[scale=0.5]
\filldraw[black] (0,0) circle (2pt) node[below] {$S^3$};
\filldraw[black] (-2,3) circle (2pt) node[below left] {$S^1$};
\filldraw[black] (-5,3) circle (2pt) node[left] {$S^2$};
\filldraw[black] (2,3) circle (2pt) node[right] {$S^1$};
\filldraw[black] (-2,6) circle (2pt) node[right] {$S^0$};
\filldraw[black] (-5,6) circle (2pt) node[left] {$S^1$};
\filldraw[black] (2,6) circle (2pt) node[right] {$S^0$};
\filldraw[black] (-2,9) circle (2pt) node[above left] {$S^0$};
\filldraw[black] (1,10) circle (2pt) node[right] {$S^0$};
\filldraw[black] (0,12) circle (2pt) node[above] {$S^0$};
\filldraw[black] (5,4) circle (2pt) node[right] {$S^1$};
\filldraw[black] (5,8) circle (2pt) node[right] {$S^1$};
\draw[ultra thick, <-](-0.1,0.2)--(-1.9,2.8);
\draw[ultra thick, <-](-0.3,0.2)--(-4.8,2.8);
\draw[ultra thick, <-](0.1,0.2)--(1.9,2.8);
\draw[ultra thick, <-](0.3,0.2)--(4.8,3.8);
\draw[ultra thick, <-](-5,3.2)--(-5,5.8);
\draw[ultra thick, <-](-2,3.2)--(-2,5.8);
\draw[ultra thick, <-](-1.8,3.2)--(1.8,5.8);
\draw[ultra thick, <-](-2.2,3.2)--(-4.8,5.8);
\draw[ultra thick, <-](2,3.2)--(2,5.8);
\draw[ultra thick, <-](5,4.2)--(5, 7.8);
\draw[ultra thick, <-](-4.8,6.2)--(-2.2,8.8);
\draw[ultra thick, <-](-2,6.2)--(-2,8.8);
\draw[ultra thick, <-](1.8,6.2)--(-1.8,8.8);
\draw[ultra thick, <-](2,6.2)--(1,9.8);
\draw[ultra thick, <-](-2,9.2)--(-0.2,11.8);
\draw[ultra thick, <-](0.9,10.1)--(0.1,11.8);
\draw[ultra thick, <-](4.8,8.2)--(0.2,12);
\end{tikzpicture}
\caption{Fixed point set of $S^3$ with $\Sigma_4$-action} 
\label{fig:S3S4}
\end{figure}

Firstly, for $H^0_{\Sigma_4}(Z(K;(D^1, S^0)), \underline{\Q})=  \Hom_{\cg^{\Q}}(\underline{H_0}(Z(K;(D^1, S^0)), \underline{\Q})$. The commutative diagram is shown in Figure \ref{S_4H_0Q}.
\begin{figure}[h]
\centering
\begin{tikzpicture}[scale=0.6]
\filldraw[black] (0,0) node[below] {$\Q$};
\filldraw[black] (-2,3) circle (2pt) node[below left] {$\Q$};
\filldraw[black] (-5,3) circle (2pt) node[left] {$\Q$};
\filldraw[black] (2,3) circle (2pt) node[right] {$\Q$};
\filldraw[black] (-2,6) circle (2pt) node[right] {$\Q^2$};
\filldraw[black] (-5,6) circle (2pt) node[left] {$\Q$};
\filldraw[black] (2,6) circle (2pt) node[right] {$\Q^2$};
\filldraw[black] (-2,9) circle (2pt) node[above left] {$\Q^2$};
\filldraw[black] (1,10) circle (2pt) node[right] {$\Q^2$};
\filldraw[black] (0,12) circle (2pt) node[above] {$\Q^2$};
\filldraw[black] (5,4) circle (2pt) node[right] {$\Q$};
\filldraw[black] (5,8) circle (2pt) node[right] {$\Q$};
\draw[ultra thick, <-](-0.1,0.2)--(-1.9,2.8);
\draw[ultra thick, <-](-0.3,0.2)--(-4.8,2.8);
\draw[ultra thick, <-](0.1,0.2)--(1.9,2.8);
\draw[ultra thick, <-](0.3,0.2)--(4.8,3.8);
\draw[ultra thick, <-](-5,3.2)--(-5,5.8);
\draw[ultra thick, <-](-2,3.2)--(-2,5.8);
\draw[ultra thick, <-](-1.8,3.2)--(1.8,5.8);
\draw[ultra thick, <-](-2.2,3.2)--(-4.8,5.8);
\draw[ultra thick, <-](2,3.2)--(2,5.8);
\draw[ultra thick, <-](5,4.2)--(5, 7.8);
\draw[ultra thick, <-](-4.8,6.2)--(-2.2,8.8);
\draw[ultra thick, <-](-2,6.2)--(-2,8.8);
\draw[ultra thick, <-](1.8,6.2)--(-1.8,8.8);
\draw[ultra thick, <-](2,6.2)--(1,9.8);
\draw[ultra thick, <-](-2,9.2)--(-0.2,11.8);
\draw[ultra thick, <-](0.9,10.1)--(0.1,11.8);
\draw[ultra thick, <-](4.8,8.2)--(0.2,12);
\filldraw[black] (14,0) circle (2pt) node[below] {$\Q$};
\filldraw[black] (12,3) circle (2pt) node[below left] {$\Q$};
\filldraw[black] (9,3) circle (2pt) node[left] {$\Q$};
\filldraw[black] (16,3) circle (2pt) node[right] {$\Q$};
\filldraw[black] (12,6) circle (2pt) node[right] {$\Q$};
\filldraw[black] (9,6) circle (2pt) node[left] {$\Q$};
\filldraw[black] (16,6) circle (2pt) node[right] {$\Q$};
\filldraw[black] (12,9) circle (2pt) node[above left] {$\Q$};
\filldraw[black] (15,10) circle (2pt) node[right] {$\Q$};
\filldraw[black] (14,12) circle (2pt) node[above] {$\Q$};
\filldraw[black] (19,4) circle (2pt) node[right] {$\Q$};
\filldraw[black] (19,8) circle (2pt) node[right] {$\Q$};
\draw[ultra thick, <-](13.9,0.2)--(12.1,2.8);
\draw[ultra thick, <-](13.7,0.2)--(9.2,2.8);
\draw[ultra thick, <-](14.1,0.2)--(15.9,2.8);
\draw[ultra thick, <-](14.3,0.2)--(18.8,3.8);
\draw[ultra thick, <-](9,3.2)--(9,5.8);
\draw[ultra thick, <-](12,3.2)--(12,5.8);
\draw[ultra thick, <-](12.2,3.2)--(15.8,5.8);
\draw[ultra thick, <-](11.8,3.2)--(9.2,5.8);
\draw[ultra thick, <-](16,3.2)--(16,5.8);
\draw[ultra thick, <-](19,4.2)--(19, 7.8);
\draw[ultra thick, <-](9.2,6.2)--(11.8,8.8);
\draw[ultra thick, <-](12,6.2)--(12,8.8);
\draw[ultra thick, <-](15.8,6.2)--(12.2,8.8);
\draw[ultra thick, <-](16,6.2)--(15,9.8);
\draw[ultra thick, <-](12,9.2)--(13.8,11.8);
\draw[ultra thick, <-](14.9,10.1)--(14.1,11.8);
\draw[ultra thick, <-](18.8,8.2)--(14.2,12);
\draw [ultra thick, ->,red] (0,0.2) to [out=10,in=170] (14,0.2);
\draw [ultra thick, ->,red] (-2,3.2) to [out=5,in=175] (12,3.2);
\draw [ultra thick, ->,red] (-5,2.8) to [out=-10,in=-170] (9,2.8);
\draw [ultra thick, ->,red] (2,2.8) to [out=-10,in=-170] (16,2.8);
\draw [ultra thick, ->,red] (-2,6.2) to [out=10,in=170] (12,6.2);
\draw [ultra thick, ->,red] (-5,5.8) to [out=-10,in=-170] (9,5.8);
\draw [ultra thick, ->,red] (2,5.8) to [out=-10,in=-170] (16,5.8);
\draw [ultra thick, ->,red] (-2,9.2) to [out=10,in=170] (12,9.2);
\draw [ultra thick, ->,red] (1,10.2) to [out=10,in=170] (15,10.2);
\draw [ultra thick, ->,red] (0,12.2) to [out=10,in=170] (14,12.2);
\draw [ultra thick, ->,red] (5.2,4) to (18.8,4);
\draw [ultra thick, ->,red] (5.2,8) to (18.8,8);
\end{tikzpicture}
\caption{$\Hom_{\cg^{\Q}}(\underline{H_0}(Z(K;(D^1, S^0)), \underline{\Q})$} 
\label{S_4H_0Q}
\end{figure}
\begin{enumerate}
\item
The map $f_0$ could be any $r\in \Q$.
\item
The maps $f_1$, $f_2$, $f_3$ and $f_8$ are uniquely determined by $f_0$.
\item
The maps $f_4$ is uniquely determined by $f_2$.
\item
The maps $f_9$ is uniquely determined by $f_8$.
\item
The maps $f_5$, $f_6$ are uniquely determined by $f_2$ if $r\neq 0$.
\item
If $r=0$, $f_5=f_6=f_7=(s,s)$ for some $s\in \Q$.
\item
The map $f_{10}$ is uniquely determined by $f_6$.
\item
The map $f_{11}$ is uniquely determined by $f_7$ or $f_10$. And since $f_7=f_{10}=(s,s)$, hence there is no further restriction on $s$.
\item
In summary, 
$$\Hom_{\cg^{\Q}}(\underline{H_0}(Z(K;(D^1, S^0)), \underline{\Q})=\Q^2.$$
\end{enumerate}

Secondly, for $$H^1_{\Sigma_4}(Z(K;(D^1, S^0)),\underline{\Q})= \Hom_{\cg^{\Q}}(\underline{H_1}(Z(K;(D^1, S^0)), \underline{\Q}),$$ the commutative diagram is shown in Figure \ref{S_4H_1Q}.

\begin{figure}[h]
\centering
\begin{tikzpicture}[scale=0.6]
\filldraw[black] (0,0) node[below] {$0$};
\filldraw[black] (-2,3) circle (2pt) node[below left] {$\Q$};
\filldraw[black] (-5,3) circle (2pt) node[left] {$0$};
\filldraw[black] (2,3) circle (2pt) node[right] {$\Q$};
\filldraw[black] (-2,6) circle (2pt) node[right] {$0$};
\filldraw[black] (-5,6) circle (2pt) node[left] {$\Q$};
\filldraw[black] (2,6) circle (2pt) node[right] {$0$};
\filldraw[black] (-2,9) circle (2pt) node[above left] {$0$};
\filldraw[black] (1,10) circle (2pt) node[right] {$0$};
\filldraw[black] (0,12) circle (2pt) node[above] {$0$};
\filldraw[black] (5,4) circle (2pt) node[right] {$\Q$};
\filldraw[black] (5,8) circle (2pt) node[right] {$\Q$};
\draw[ultra thick, <-](-0.1,0.2)--(-1.9,2.8);
\draw[ultra thick, <-](-0.3,0.2)--(-4.8,2.8);
\draw[ultra thick, <-](0.1,0.2)--(1.9,2.8);
\draw[ultra thick, <-](0.3,0.2)--(4.8,3.8);
\draw[ultra thick, <-](-5,3.2)--(-5,5.8);
\draw[ultra thick, <-](-2,3.2)--(-2,5.8);
\draw[ultra thick, <-](-1.8,3.2)--(1.8,5.8);
\draw[ultra thick, <-](-2.2,3.2)--(-4.8,5.8);
\draw[ultra thick, <-](2,3.2)--(2,5.8);
\draw[ultra thick, <-](5,4.2)--(5, 7.8);
\draw[ultra thick, <-](-4.8,6.2)--(-2.2,8.8);
\draw[ultra thick, <-](-2,6.2)--(-2,8.8);
\draw[ultra thick, <-](1.8,6.2)--(-1.8,8.8);
\draw[ultra thick, <-](2,6.2)--(1,9.8);
\draw[ultra thick, <-](-2,9.2)--(-0.2,11.8);
\draw[ultra thick, <-](0.9,10.1)--(0.1,11.8);
\draw[ultra thick, <-](4.8,8.2)--(0.2,12);
\filldraw[black] (14,0) circle (2pt) node[below] {$\Q$};
\filldraw[black] (12,3) circle (2pt) node[below left] {$\Q$};
\filldraw[black] (9,3) circle (2pt) node[left] {$\Q$};
\filldraw[black] (16,3) circle (2pt) node[right] {$\Q$};
\filldraw[black] (12,6) circle (2pt) node[right] {$\Q$};
\filldraw[black] (9,6) circle (2pt) node[left] {$\Q$};
\filldraw[black] (16,6) circle (2pt) node[right] {$\Q$};
\filldraw[black] (12,9) circle (2pt) node[above left] {$\Q$};
\filldraw[black] (15,10) circle (2pt) node[right] {$\Q$};
\filldraw[black] (14,12) circle (2pt) node[above] {$\Q$};
\filldraw[black] (19,4) circle (2pt) node[right] {$\Q$};
\filldraw[black] (19,8) circle (2pt) node[right] {$\Q$};
\draw[ultra thick, <-](13.9,0.2)--(12.1,2.8);
\draw[ultra thick, <-](13.7,0.2)--(9.2,2.8);
\draw[ultra thick, <-](14.1,0.2)--(15.9,2.8);
\draw[ultra thick, <-](14.3,0.2)--(18.8,3.8);
\draw[ultra thick, <-](9,3.2)--(9,5.8);
\draw[ultra thick, <-](12,3.2)--(12,5.8);
\draw[ultra thick, <-](12.2,3.2)--(15.8,5.8);
\draw[ultra thick, <-](11.8,3.2)--(9.2,5.8);
\draw[ultra thick, <-](16,3.2)--(16,5.8);
\draw[ultra thick, <-](19,4.2)--(19, 7.8);
\draw[ultra thick, <-](9.2,6.2)--(11.8,8.8);
\draw[ultra thick, <-](12,6.2)--(12,8.8);
\draw[ultra thick, <-](15.8,6.2)--(12.2,8.8);
\draw[ultra thick, <-](16,6.2)--(15,9.8);
\draw[ultra thick, <-](12,9.2)--(13.8,11.8);
\draw[ultra thick, <-](14.9,10.1)--(14.1,11.8);
\draw[ultra thick, <-](18.8,8.2)--(14.2,12);
\draw [ultra thick, ->,red] (-5,5.8) to [out=-10,in=-170] (9,5.8);
\draw [ultra thick, ->,red] (-2,3.2) to [out=5,in=175] (12,3.2);
\draw [ultra thick, ->,red] (2,2.8) to [out=-10,in=-170] (16,2.8);
\draw [ultra thick, ->,red] (5.2,4) to (18.8,4);
\draw [ultra thick, ->,red] (5.2,8) to (18.8,8);
\end{tikzpicture}
\caption{$\Hom_{\cg^{\Q}}(\underline{H_0}(Z(K;(D^1, S^0)), \underline{\Q})$} 
\label{S_4H_1Q}
\end{figure}
\begin{enumerate}
\item
By Lemma \ref{handy}, we have $f_2=f_4=f_8=0$.
\begin{figure}[H]
\centering
\begin{tikzpicture}[scale=1]
\node[] at (0,0) {$0$};
\node[] at (0,2) {$\Q$};
\node[] at (2,0) {$\Q$};
\node[] at (2,2) {$\Q$};
\node[above] at (1,2) {$f_2$};
\node[right] at (2,1) {$1$};
\draw[ultra thick, ->](0,1.7)--(0,0.3);
\draw[ultra thick, ->](2,1.7)--(2,0.3);
\draw[ultra thick, ->](0.3,0)--(1.7,0);
\draw[ultra thick, ->](0.3,2)--(1.7,2);
\end{tikzpicture}
\end{figure}
\item
Then $f_9$ has to be $0$ as well.
\begin{figure}[H]
\centering
\begin{tikzpicture}[scale=1]
\node[] at (0,0) {$\Q$};
\node[] at (0,2) {$\Q$};
\node[] at (2,0) {$\Q$};
\node[] at (2,2) {$\Q$};
\node[above] at (1,2) {$f_9$};
\node[right] at (2,1) {$1$};
\node[above] at (1,0) {$f_8=0$};
\draw[ultra thick, ->](0,1.7)--(0,0.3);
\draw[ultra thick, ->](2,1.7)--(2,0.3);
\draw[ultra thick, ->](0.3,0)--(1.7,0);
\draw[ultra thick, ->](0.3,2)--(1.7,2);
\end{tikzpicture}
\end{figure}
\item
Hence
$$\Hom_{\cg^{\Q}}(\underline{H_1}(Z(K;(D^1, S^0)), \underline{\Q})=0.$$
\end{enumerate}
For $n=2,3$, the computation is exactly the same as in $D_8$ case. Hence we have the same result as well.

Summarize all the computation above, we have the following theorem.

\begin{theorem}
\label{BCforconstantQ}
Let $K$ be the boundary of 3-simplex, the 3-sphere $S^3=Z(K;(D^1, S^0))$ defined by the polyhedral product admits a $\Sigma_4$-action. $$H=\langle (1234),(12)(34)\rangle$$ is one of the Sylow 2-subgroups of $\Sigma_4$. It is also isomorphic to $D_8$, the dehidral group of order 8. For $G=\Sigma_4$ or $D_8$, the Bredon cohomology with coefficient $\underline{\Q}$ is given by
\[
H^n_{G}(Z(K;(D^1, S^0)),\underline{\Q})=\left\{
\begin{array}{ll}
\Q^2 & \text{for } n=0;\\
0 & \text{for } n=1,2;\\
\Q & \text{for } n=3.
\end{array}\right.
\]
\hfill $\square$
\end{theorem}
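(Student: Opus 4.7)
The plan is to invoke the corollary immediately preceding the theorem: since $\underline{\Q}$ is injective, the universal coefficient spectral sequence collapses and reduces the theorem to
$$H^n_G(X,\underline{\Q}) = \Hom_{\cg^\Q}(\underline{H_n}(X), \underline{\Q}).$$
Thus for each of $G = D_8, \Sigma_4$ and each $n$, the task becomes the computation of a Hom group in the abelian category of reduced Bredon coefficient systems over $\Q$.

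First I would determine the coefficient system $\underline{H_n}(Z(K;(D^1,S^0)))$ from the fixed point data. By Theorem \ref{fixed point theorem} together with Corollary \ref{sphere fixed point}, for each subgroup $H<G$ the fixed set $(S^3)^H$ is a sphere of dimension $k-1$, where $k$ is the number of $H$-orbits on the four vertices of $\partial\Delta^3$. Enumerating the subgroup lattices of $D_8$ and $\Sigma_4$ already tabulated in Figures \ref{fig:D8} and \ref{fig:S4}, this produces the fixed point diagrams of Figures \ref{fig:S3D8} and \ref{fig:S3S4}, from which one reads off $\underline{H_n}(X)(G/H)$ together with the restriction maps induced by inclusions of spheres; the only nontrivial such maps are identities on $\Q$ in degrees $0,1$ and the diagonal-like inclusions $\Q \hookrightarrow \Q^2 = H_0(S^0)$ by $1 \mapsto (1,-1)$ in degree $0$.

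Second I would compute the Hom degree by degree. A morphism $\underline{H_n}(X) \to \underline{\Q}$ is a family of $\Q$-linear maps $f_i$ at the various objects commuting with every arrow in the orbit diagram. Starting at the bottom and propagating upward, Lemma \ref{handy} forces each $f_i$ to be determined by the previously chosen data, except at those $\Q^2$ nodes whose arrow from below vanishes, where one extra free parameter appears. For $n=1,2$ every candidate nonzero $f_i$ is killed because it sits below an arrow whose target is a $\Q^2$ with a $(1,-1)$ inclusion, forcing the Hom to vanish. For $n=3$ only the bottom object is nonzero, giving $\Hom = \Q$. For $n=0$ the propagation yields exactly two independent parameters.

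The main subtlety, and the step I expect to require the most care, is the $n=0$ computation: one must check that the second parameter introduced at the $\Q^2$ nodes above vanishing maps propagates consistently through competing paths to the top $G/G$ object, producing no additional relations. In passing from $D_8$ to $\Sigma_4$ one must further verify that the extra chain through $\Sigma_3$ and $\langle(123)\rangle$, whose fixed points are copies of $S^1$, imposes no new constraints; this should follow because $H_0(S^1)=\Q$ simply propagates the existing parameter, while $H_1(S^1)$ contributes a $\Q$ that sits below the trivially-zero top objects and so contributes nothing to Hom. Combining the four Hom computations then yields the stated values $\Q^2, 0, 0, \Q$ in degrees $0,1,2,3$ for both groups.
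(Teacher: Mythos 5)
The proposal follows the same route the paper takes: use the injectivity of $\underline{\Q}$ to collapse the universal coefficient spectral sequence to $H^n_G(X,\underline{\Q}) = \Hom_{\cg^\Q}(\underline{H_n}(X),\underline{\Q})$, read off $\underline{H_n}(X)$ from the fixed-point data of Theorem \ref{fixed point theorem}, and then propagate commutativity constraints through the orbit diagram. But there are gaps in the execution that do not disappear just because the paper's own proof makes the same omissions.

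First, the stated mechanism for the vanishing in degrees $n=1,2$ is not right. The proposal says the $f_i$ are killed because they sit ``below an arrow whose target is a $\Q^2$ with a $(1,-1)$ inclusion,'' but $\Q^2$ terms occur only in $\underline{H_0}$; in degrees $1,2$ the relevant fact is simply that $H_n(S^3)=0$, and since the structure maps of $\underline{\Q}$ are all identities, Lemma \ref{handy}(i) forces every $f_i$ lying over this zero to vanish. The conclusion is correct, but for the wrong reason.

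Second, and more seriously, neither the proposal nor the figures it cites enforce commutativity with the Weyl-group loops on each object --- the paper even remarks that those self-morphisms ``are not drawn in the figure'' for $D_8$. Because every restriction map of $\underline{\Q}$ is the identity, a natural transformation $f:\underline{H_n}(X)\to\underline{\Q}$ satisfies $f_{G/H}=f_{G/\{e\}}\circ\underline{H_n}(X)(G/\{e\}\to G/H)$, so it is completely determined by $f_{G/\{e\}}:H_n(S^3)\to\Q$, subject only to invariance under the $W\{e\}=G$ action. In degree $0$ this gives at most $\dim_\Q(H_0(S^3)^*)^G=1$ free parameter, not two, so the claim that ``the propagation yields exactly two independent parameters'' must be justified very carefully --- the extra parameter the proposal locates at the $\Q^2$ nodes is eliminated once one notes that the restriction $H_0(S^0)\to H_0(S^1)$ is nonzero and hence $f_5,f_6$ are still determined by $f_2$ even when $f_2=0$ (Lemma \ref{handy}(i), not (iii)). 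In degree $3$ the Weyl group $W\{e\}=G$ acts on $H_3(S^3)\cong\Q$ through the sign of the permutation, so invariance forces $f_0=0$ and $\Hom=0$, not $\Q$. The proposal should either exhibit explicitly where the naturality-with-loops constraint fails to bite, or revisit the degree $0$ and $3$ calculations, since as written they do not reproduce the stated answer from the stated method.
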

\subsection{Computation towards general examples}
Finally let us try to compute an example for a coefficient system $\underline{M}$ where $\underline{M}$ is not injective. Again let $K=\partial\Delta^3$ and $Z(K;(D^1,s^0))$ is the $3$-sphere that admits a $\Sigma_4$-action. We are going to restrict ourselves to $G=D_8$ case for this example. Consider the 1-dimension coefficient system $\underline{M}$ given by Figure \ref{fig:BCM}.

\begin{figure}[h]
\centering
\begin{tikzpicture}[scale=0.7]
\filldraw[black] (0,0) circle (2pt) node[below] {$\Q$};
\filldraw[black] (0,3) circle (2pt) node[below right] {$0$};
\filldraw[black] (-3,3) circle (2pt) node[left] {$0$};
\filldraw[black] (3,3) circle (2pt) node[right] {$0$};
\filldraw[black] (0,6) circle (2pt) node[right] {$0$};
\filldraw[black] (-3,6) circle (2pt) node[left] {$0$};
\filldraw[black] (3,6) circle (2pt) node[right] {$0$};
\filldraw[black] (0,9) circle (2pt) node[above] {$0$};
\draw[ultra thick, <-](0,0.2)--(0,2.8);
\draw[ultra thick, <-](-0.2,0.2)--(-2.8,2.8);
\draw[ultra thick, <-](0.2,0.2)--(2.8,2.8);
\draw[ultra thick, <-](0,3.2)--(0,5.8);
\draw[ultra thick, <-](-3,3.2)--(-3,5.8);
\draw[ultra thick, <-](3,3.2)--(3,5.8);
\draw[ultra thick, <-](-0.2, 3.2)--(-2.8, 5.8);
\draw[ultra thick, <-](0.2,3.2)--(2.8,5.8);
\draw[ultra thick, <-](-2.8,6.2)--(-0.2,8.8);
\draw[ultra thick, <-](2.8,6.2)--(0.2,8.8);
\draw[ultra thick, <-](0,6.2)--(0,8.8);
\end{tikzpicture}
\caption{$\underline{M}$} 
\label{fig:BCM}
\end{figure}
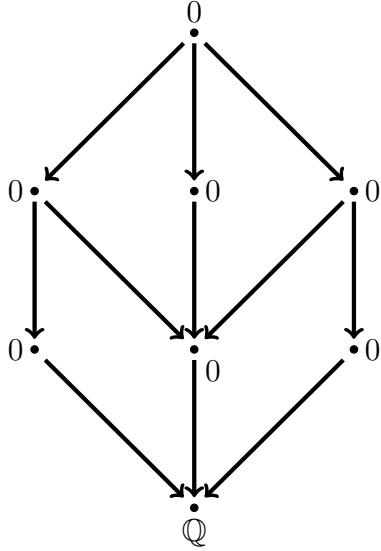

By Theorem \ref{injective envelope}, $\underline{M}$ admits an injective resolution 
$$0\rightarrow \underline{M}\rightarrow \underline{\Q}\rightarrow \underline{I_1} \rightarrow \underline{I_2}\rightarrow 0$$ 
The two new invectives $\underline{I_1}$ and $\underline{I_2}$ is given by Figure \ref{fig:I1RM} and Figure \ref{fig:I2RM}.

\begin{figure}[h]
\centering
\begin{tikzpicture}[scale=0.7]
\filldraw[black] (0,0) circle (2pt) node[below] {$0$};
\filldraw[black] (0,3) circle (2pt) node[below right] {$\Q$};
\filldraw[black] (-3,3) circle (2pt) node[left] {$\Q$};
\filldraw[black] (3,3) circle (2pt) node[right] {$\Q$};
\filldraw[black] (0,6) circle (2pt) node[right] {$\Q$};
\filldraw[black] (-3,6) circle (2pt) node[left] {$\Q^2$};
\filldraw[black] (3,6) circle (2pt) node[right] {$\Q^2$};
\filldraw[black] (0,9) circle (2pt) node[above] {$\Q^3$};
\draw[ultra thick, <-](0,0.2)--(0,2.8);
\draw[ultra thick, <-](-0.2,0.2)--(-2.8,2.8);
\draw[ultra thick, <-](0.2,0.2)--(2.8,2.8);
\draw[ultra thick, <-](0,3.2)--(0,5.8);
\draw[ultra thick, <-](-3,3.2)--(-3,5.8);
\draw[ultra thick, <-](3,3.2)--(3,5.8);
\draw[ultra thick, <-](-0.2, 3.2)--(-2.8, 5.8);
\draw[ultra thick, <-](0.2,3.2)--(2.8,5.8);
\draw[ultra thick, <-](-2.8,6.2)--(-0.2,8.8);
\draw[ultra thick, <-](2.8,6.2)--(0.2,8.8);
\draw[ultra thick, <-](0,6.2)--(0,8.8);
\node[above left] at (-1.5,7.5) {$\left(
\begin{array}{ccc}
1 &0 & 0\\
0 &1 & 0
\end{array}\right)$};
\node[above right] at (1.5,7.5) {$\left(
\begin{array}{ccc}
0 &1 & 0\\
0 &0 & 1
\end{array}\right)$};
\node[left] at (-3,4.5) {$(1,0)$};
\node[right] at (3,4.5) {$(0,1)$};
\node[above] at (-1.5,4.5) {$(0,1)$};
\node[above] at (1.5,4.5) {$(1,0)$};
\node[left] at (0,4.5) {$1$};
\node[] at (-0.5,7.5) {$(0,1,0)$};
\end{tikzpicture}
\caption{$\underline{I_1}$} 
\label{fig:I1RM}
\end{figure}

\begin{figure}[h]
\centering
\begin{tikzpicture}[scale=0.7]
\filldraw[black] (0,0) circle (2pt) node[below] {$0$};
\filldraw[black] (0,3) circle (2pt) node[below right] {$0$};
\filldraw[black] (-3,3) circle (2pt) node[left] {$0$};
\filldraw[black] (3,3) circle (2pt) node[right] {$0$};
\filldraw[black] (0,6) circle (2pt) node[right] {$0$};
\filldraw[black] (-3,6) circle (2pt) node[left] {$\Q$};
\filldraw[black] (3,6) circle (2pt) node[right] {$\Q$};
\filldraw[black] (0,9) circle (2pt) node[above] {$\Q^2$};
\draw[ultra thick, <-](0,0.2)--(0,2.8);
\draw[ultra thick, <-](-0.2,0.2)--(-2.8,2.8);
\draw[ultra thick, <-](0.2,0.2)--(2.8,2.8);
\draw[ultra thick, <-](0,3.2)--(0,5.8);
\draw[ultra thick, <-](-3,3.2)--(-3,5.8);
\draw[ultra thick, <-](3,3.2)--(3,5.8);
\draw[ultra thick, <-](-0.2, 3.2)--(-2.8, 5.8);
\draw[ultra thick, <-](0.2,3.2)--(2.8,5.8);
\draw[ultra thick, <-](-2.8,6.2)--(-0.2,8.8);
\draw[ultra thick, <-](2.8,6.2)--(0.2,8.8);
\draw[ultra thick, <-](0,6.2)--(0,8.8);
\node[above left] at (-1.5,7.5) {$(1,0)$};
\node[above right] at (1.5,7.5) {$(0,1)$};
\end{tikzpicture}
\caption{$\underline{I_2}$} 
\label{fig:I2RM}
\end{figure}
We are going to compute the $E_2$-page of the universal coefficient spectral sequence $$E_2^{p,q}=\Ext_{\cg^R}^{p,q}(\underline{H_*}(Z(K, (D^1, S^0))), \underline{M})$$
Notice that we have just computed the $q=0$ row in Section 4.1. 

\subsubsection{\underline{$q=1$ row}} The map between $\underline{H_0}(Z(K;(D^1,S^0)))$ and $\underline{I_1}$ is given in Figure \ref{D_8H_0I1}.

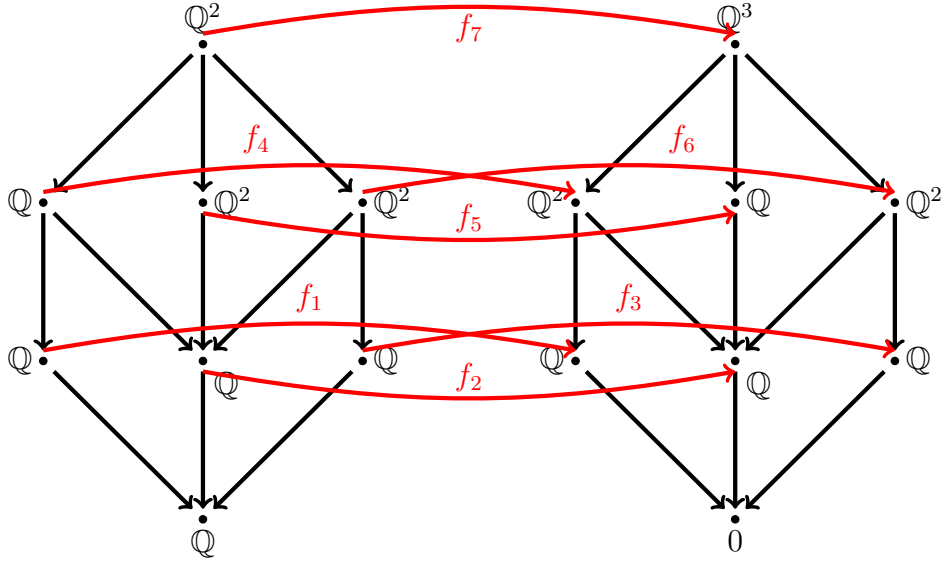
\begin{figure}[h]
\centering
\begin{tikzpicture}[scale=0.7]
\filldraw[black] (0,0) circle (2pt) node[below] {$\Q$};
\filldraw[black] (0,3) circle (2pt) node[below right] {$\Q$};
\filldraw[black] (-3,3) circle (2pt) node[left] {$\Q$};
\filldraw[black] (3,3) circle (2pt) node[right] {$\Q$};
\filldraw[black] (0,6) circle (2pt) node[right] {$\Q^2$};
\filldraw[black] (-3,6) circle (2pt) node[left] {$\Q$};
\filldraw[black] (3,6) circle (2pt) node[right] {$\Q^2$};
\filldraw[black] (0,9) circle (2pt) node[above] {$\Q^2$};
\draw[ultra thick, <-](0,0.2)--(0,2.8);
\draw[ultra thick, <-](-0.2,0.2)--(-2.8,2.8);
\draw[ultra thick, <-](0.2,0.2)--(2.8,2.8);
\draw[ultra thick, <-](0,3.2)--(0,5.8);
\draw[ultra thick, <-](-3,3.2)--(-3,5.8);
\draw[ultra thick, <-](3,3.2)--(3,5.8);
\draw[ultra thick, <-](-0.2, 3.2)--(-2.8, 5.8);
\draw[ultra thick, <-](0.2,3.2)--(2.8,5.8);
\draw[ultra thick, <-](-2.8,6.2)--(-0.2,8.8);
\draw[ultra thick, <-](2.8,6.2)--(0.2,8.8);
\draw[ultra thick, <-](0,6.2)--(0,8.8);
\filldraw[black] (10,0) circle (2pt) node[below] {$0$};
\filldraw[black] (10,3) circle (2pt) node[below right] {$\Q$};
\filldraw[black] (7,3) circle (2pt) node[left] {$\Q$};
\filldraw[black] (13,3) circle (2pt) node[right] {$\Q$};
\filldraw[black] (10,6) circle (2pt) node[right] {$\Q$};
\filldraw[black] (7,6) circle (2pt) node[left] {$\Q^2$};
\filldraw[black] (13,6) circle (2pt) node[right] {$\Q^2$};
\filldraw[black] (10,9) circle (2pt) node[above] {$\Q^3$};
\draw[ultra thick, <-](10,0.2)--(10,2.8);
\draw[ultra thick, <-](9.8,0.2)--(7.2,2.8);
\draw[ultra thick, <-](10.2,0.2)--(12.8,2.8);
\draw[ultra thick, <-](10,3.2)--(10,5.8);
\draw[ultra thick, <-](7,3.2)--(7,5.8);
\draw[ultra thick, <-](13,3.2)--(13,5.8);
\draw[ultra thick, <-](9.8, 3.2)--(7.2, 5.8);
\draw[ultra thick, <-](10.2,3.2)--(12.8,5.8);
\draw[ultra thick, <-](7.2,6.2)--(9.8,8.8);
\draw[ultra thick, <-](12.8,6.2)--(10.2,8.8);
\draw[ultra thick, <-](10,6.2)--(10,8.8);
\draw [ultra thick, ->,red] (0,2.8) to [out=-10,in=-170] (10,2.8);
\draw [ultra thick, ->,red] (0,5.8) to [out=-10,in=-170] (10,5.8);
\draw [ultra thick, ->,red] (0,9.2) to [out=10,in=170] (10,9.2);
\draw [ultra thick, ->,red] (-3,3.2) to [out=10,in=170] (7,3.2);
\draw [ultra thick, ->,red] (3,3.2) to [out=10,in=170] (13,3.2);
\draw [ultra thick, ->,red] (-3,6.2) to [out=10,in=170] (7,6.2);
\draw [ultra thick, ->,red] (3,6.2) to [out=10,in=170] (13,6.2);
\node[red] at (2,4.2) {$f_1$};
\node[red] at (5,2.7) {$f_2$};
\node[red] at (8,4.2) {$f_3$};
\node[red] at (1,7.2) {$f_4$};
\node[red] at (5,5.7) {$f_5$};
\node[red] at (9,7.2) {$f_6$};
\node[red] at (5,9.3) {$f_7$};
\end{tikzpicture}
\caption{$\Hom_{\cg^{\Q}}(\underline{H_0}(Z(K;(D^1, S^0)), \underline{I_1})$} 
\label{D_8H_0I1}
\end{figure}
\begin{enumerate}
\item
We first assume $f_1=s$, $f_2=t$, $f_3=r$.
\item
Next we set $f_4=(a_1, a_2)^T$, then by the commutative diagram
\begin{figure}[H]
\centering
\begin{tikzpicture}[scale=1]
\node[] at (0,0) {$\Q$};
\node[] at (0,2) {$\Q$};
\node[] at (2,0) {$\Q$};
\node[] at (2,2) {$\Q^2$};
\node[above] at (1,2) {$(a_1,a_2)^T$};
\node[right] at (2,1) {$(1,0)$};
\node[above] at (1,0) {$f_1=s$};
\node[left] at (0,1) {$1$};
\draw[ultra thick, ->](0,1.7)--(0,0.3);
\draw[ultra thick, ->](2,1.7)--(2,0.3);
\draw[ultra thick, ->](0.3,0)--(1.7,0);
\draw[ultra thick, ->](0.3,2)--(1.7,2);
\end{tikzpicture}
\end{figure}
we have $a_1=s$. And by another commutative diagram
\begin{figure}[H]
\centering
\begin{tikzpicture}[scale=1]
\node[] at (0,0) {$\Q$};
\node[] at (0,2) {$\Q$};
\node[] at (2,0) {$\Q$};
\node[] at (2,2) {$\Q^2$};
\node[above] at (1,2) {$(a_1,a_2)^T$};
\node[right] at (2,1) {$(0,1)$};
\node[above] at (1,0) {$f_2=t$};
\node[left] at (0,1) {$1$};
\draw[ultra thick, ->](0,1.7)--(0,0.3);
\draw[ultra thick, ->](2,1.7)--(2,0.3);
\draw[ultra thick, ->](0.3,0)--(1.7,0);
\draw[ultra thick, ->](0.3,2)--(1.7,2);
\end{tikzpicture}
\end{figure}
we have $a_2=t$. Hence $f_4=(s,t)^T$.
\item
Next consider $f_5$, by the commutative diagram
\begin{figure}[H]
\begin{tikzpicture}[scale=1]
\node[] at (0,0) {$\Q$};
\node[] at (0,2) {$\Q^2$};
\node[] at (2,0) {$\Q$};
\node[] at (2,2) {$\Q$};
\node[above] at (1,2) {$f_5$};
\node[right] at (2,1) {$1$};
\node[above] at (1,0) {$f_2=t$};
\node[left] at (0,1) {$(1,-1)$};
\draw[ultra thick, ->](0,1.7)--(0,0.3);
\draw[ultra thick, ->](2,1.7)--(2,0.3);
\draw[ultra thick, ->](0.3,0)--(1.7,0);
\draw[ultra thick, ->](0.3,2)--(1.7,2);
\end{tikzpicture}
\end{figure}
we have $f_5=(t,-t)$.
\item
For $f_6$, assume that $$f_6=\left(\begin{array}{cc}
b_1& b_2\\
b_3 &b_4
\end{array}\right).$$
From the commutative diagram
\begin{figure}[H]
\begin{tikzpicture}[scale=1]
\node[] at (0,0) {$\Q$};
\node[] at (0,2) {$\Q^2$};
\node[] at (2,0) {$\Q$};
\node[] at (2,2) {$\Q^2$};
\node[above] at (1,2) {$f_6$};
\node[right] at (2,1) {$(1,0)$};
\node[above] at (1,0) {$f_2=t$};
\node[left] at (0,1) {$(1,-1)$};
\draw[ultra thick, ->](0,1.7)--(0,0.3);
\draw[ultra thick, ->](2,1.7)--(2,0.3);
\draw[ultra thick, ->](0.3,0)--(1.7,0);
\draw[ultra thick, ->](0.3,2)--(1.7,2);
\end{tikzpicture}
\end{figure}
we conclude that $b_1=t$ and $b_2=-t$. Similarly $b_3=r$ and $b_4=-r$. Hence
$$f_6=\left(\begin{array}{cc}
t& -t\\
r & -r
\end{array}\right).$$
\item
Finally for $f_7$, assume that $$f_7=\left(\begin{array}{cc}
c_1& c_2\\
c_3 &c_4\\
c_5 &c_6
\end{array}\right).$$
From the commutative diagram
\begin{figure}[H]
\begin{tikzpicture}[scale=1]
\node[] at (0,0) {$\Q$};
\node[] at (0,2) {$\Q^2$};
\node[] at (2,0) {$\Q^2$};
\node[] at (2,2) {$\Q^3$};
\node[above] at (1,2) {$f_7$};
\node[right] at (2,1) {$\left(
\begin{array}{ccc}
1 &0 & 0\\
0 &1 & 0
\end{array}\right)$};
\node[above] at (1,0) {$f_4$};
\node[left] at (0,1) {$(1,-1)$};
\draw[ultra thick, ->](0,1.7)--(0,0.3);
\draw[ultra thick, ->](2,1.7)--(2,0.3);
\draw[ultra thick, ->](0.3,0)--(1.7,0);
\draw[ultra thick, ->](0.3,2)--(1.7,2);
\end{tikzpicture}
\end{figure}
we have $c_1=s$, $c_2=-s$, $c_3=t$ and $c_4=-t$. From the commutative diagram
\begin{figure}[H]
\begin{tikzpicture}[scale=1]
\node[] at (0,0) {$\Q^2$};
\node[] at (0,2) {$\Q^2$};
\node[] at (2,0) {$\Q^2$};
\node[] at (2,2) {$\Q^3$};
\node[above] at (1,2) {$f_7$};
\node[right] at (2,1) {$\left(
\begin{array}{ccc}
0&1 &0\\
0&0 &1
\end{array}\right)$};
\node[above] at (1,0) {$f_6$};
\node[left] at (0,1) {Id};
\draw[ultra thick, ->](0,1.7)--(0,0.3);
\draw[ultra thick, ->](2,1.7)--(2,0.3);
\draw[ultra thick, ->](0.3,0)--(1.7,0);
\draw[ultra thick, ->](0.3,2)--(1.7,2);
\end{tikzpicture}
\end{figure}
we have $c_5=r, c_6=-r$. Hence
$$f_7=\left(\begin{array}{cc}
s& -s\\
t &-t\\
r &-r
\end{array}\right).$$
\item
In summary, there are three free variables in total and $$\Hom_{\cg^{\Q}}(\underline{H_0}(Z(K;(D^1, S^0)), \underline{I_1})=\Q^3.$$
\end{enumerate}
The map between $\underline{H_1}(Z(K;(D^1,S^0)))$ and $\underline{I_1}$ is given in Figure \ref{D_8H_1I1}.

\begin{figure}[h]
\centering
\begin{tikzpicture}[scale=0.7]
\filldraw[black] (0,0) circle (2pt) node[below] {$0$};
\filldraw[black] (0,3) circle (2pt) node[below right] {$\Q$};
\filldraw[black] (-3,3) circle (2pt) node[left] {$0$};
\filldraw[black] (3,3) circle (2pt) node[right] {$\Q$};
\filldraw[black] (0,6) circle (2pt) node[right] {$0$};
\filldraw[black] (-3,6) circle (2pt) node[left] {$\Q$};
\filldraw[black] (3,6) circle (2pt) node[right] {$0$};
\filldraw[black] (0,9) circle (2pt) node[above] {$0$};
\draw[ultra thick, <-](0,0.2)--(0,2.8);
\draw[ultra thick, <-](-0.2,0.2)--(-2.8,2.8);
\draw[ultra thick, <-](0.2,0.2)--(2.8,2.8);
\draw[ultra thick, <-](0,3.2)--(0,5.8);
\draw[ultra thick, <-](-3,3.2)--(-3,5.8);
\draw[ultra thick, <-](3,3.2)--(3,5.8);
\draw[ultra thick, <-](-0.2, 3.2)--(-2.8, 5.8);
\draw[ultra thick, <-](0.2,3.2)--(2.8,5.8);
\draw[ultra thick, <-](-2.8,6.2)--(-0.2,8.8);
\draw[ultra thick, <-](2.8,6.2)--(0.2,8.8);
\draw[ultra thick, <-](0,6.2)--(0,8.8);
\filldraw[black] (10,0) circle (2pt) node[below] {$0$};
\filldraw[black] (10,3) circle (2pt) node[below right] {$\Q$};
\filldraw[black] (7,3) circle (2pt) node[left] {$\Q$};
\filldraw[black] (13,3) circle (2pt) node[right] {$\Q$};
\filldraw[black] (10,6) circle (2pt) node[right] {$\Q$};
\filldraw[black] (7,6) circle (2pt) node[left] {$\Q^2$};
\filldraw[black] (13,6) circle (2pt) node[right] {$\Q^2$};
\filldraw[black] (10,9) circle (2pt) node[above] {$\Q^3$};
\draw[ultra thick, <-](10,0.2)--(10,2.8);
\draw[ultra thick, <-](9.8,0.2)--(7.2,2.8);
\draw[ultra thick, <-](10.2,0.2)--(12.8,2.8);
\draw[ultra thick, <-](10,3.2)--(10,5.8);
\draw[ultra thick, <-](7,3.2)--(7,5.8);
\draw[ultra thick, <-](13,3.2)--(13,5.8);
\draw[ultra thick, <-](9.8, 3.2)--(7.2, 5.8);
\draw[ultra thick, <-](10.2,3.2)--(12.8,5.8);
\draw[ultra thick, <-](7.2,6.2)--(9.8,8.8);
\draw[ultra thick, <-](12.8,6.2)--(10.2,8.8);
\draw[ultra thick, <-](10,6.2)--(10,8.8);
\draw [ultra thick, ->,red] (0,2.8) to [out=-10,in=-170] (10,2.8);
\draw [ultra thick, ->,red] (3,3.2) to [out=10,in=170] (13,3.2);
\draw [ultra thick, ->,red] (-3,6.2) to [out=10,in=170] (7,6.2);
\node[red] at (5,2.7) {$f_2$};
\node[red] at (8,4.2) {$f_3$};
\node[red] at (1,7.2) {$f_4$};
\end{tikzpicture}
\caption{$\Hom_{\cg^{\Q}}(\underline{H_1}(Z(K;(D^1, S^0)), \underline{I_1})$} 
\label{D_8H_1I1}
\end{figure}
\begin{enumerate}
\item
First we assume $f_2=s$, $f_3=t$ and $f_4=(a,b)^T$
\item
From the following two commutative diagrams
\begin{figure}[H]
\begin{tikzpicture}[scale=1]
\node[] at (0,0) {$0$};
\node[] at (0,2) {$\Q$};
\node[] at (2,0) {$\Q$};
\node[] at (2,2) {$\Q^2$};
\node[above] at (1,2) {$f_4$};
\node[right] at (2,1) {$(1,0)$};
\node[above] at (1,0) {};
\node[left] at (0,1) {};
\draw[ultra thick, ->](0,1.7)--(0,0.3);
\draw[ultra thick, ->](2,1.7)--(2,0.3);
\draw[ultra thick, ->](0.3,0)--(1.7,0);
\draw[ultra thick, ->](0.3,2)--(1.7,2);
\end{tikzpicture}
\begin{tikzpicture}[scale=1]
\node[] at (0,0) {$\Q$};
\node[] at (0,2) {$\Q$};
\node[] at (2,0) {$\Q$};
\node[] at (2,2) {$\Q^2$};
\node[above] at (1,2) {$f_4$};
\node[right] at (2,1) {$(0,1)$};
\node[above] at (1,0) {$f_2=s$};
\node[left] at (0,1) {$1$};
\draw[ultra thick, ->](0,1.7)--(0,0.3);
\draw[ultra thick, ->](2,1.7)--(2,0.3);
\draw[ultra thick, ->](0.3,0)--(1.7,0);
\draw[ultra thick, ->](0.3,2)--(1.7,2);
\end{tikzpicture}
\end{figure}
we conclude that $f_4=(0,s)^T$. 
\item
In summary, $$\Hom_{\cg^{\Q}}(\underline{H_1}(Z(K;(D^1, S^0)), \underline{I_1})=\Q^2.$$
\end{enumerate}

The map between $\underline{H_2}(Z(K;(D^1,S^0)))$ and $\underline{I_1}$ is given in Figure \ref{D_8H_2I1}.

\begin{figure}[h]
\centering
\begin{tikzpicture}[scale=0.7]
\filldraw[black] (0,0) circle (2pt) node[below] {$0$};
\filldraw[black] (0,3) circle (2pt) node[below right] {$0$};
\filldraw[black] (-3,3) circle (2pt) node[left] {$\Q$};
\filldraw[black] (3,3) circle (2pt) node[right] {$0$};
\filldraw[black] (0,6) circle (2pt) node[right] {$0$};
\filldraw[black] (-3,6) circle (2pt) node[left] {$0$};
\filldraw[black] (3,6) circle (2pt) node[right] {$0$};
\filldraw[black] (0,9) circle (2pt) node[above] {$0$};
\draw[ultra thick, <-](0,0.2)--(0,2.8);
\draw[ultra thick, <-](-0.2,0.2)--(-2.8,2.8);
\draw[ultra thick, <-](0.2,0.2)--(2.8,2.8);
\draw[ultra thick, <-](0,3.2)--(0,5.8);
\draw[ultra thick, <-](-3,3.2)--(-3,5.8);
\draw[ultra thick, <-](3,3.2)--(3,5.8);
\draw[ultra thick, <-](-0.2, 3.2)--(-2.8, 5.8);
\draw[ultra thick, <-](0.2,3.2)--(2.8,5.8);
\draw[ultra thick, <-](-2.8,6.2)--(-0.2,8.8);
\draw[ultra thick, <-](2.8,6.2)--(0.2,8.8);
\draw[ultra thick, <-](0,6.2)--(0,8.8);
\filldraw[black] (10,0) circle (2pt) node[below] {$0$};
\filldraw[black] (10,3) circle (2pt) node[below right] {$\Q$};
\filldraw[black] (7,3) circle (2pt) node[left] {$\Q$};
\filldraw[black] (13,3) circle (2pt) node[right] {$\Q$};
\filldraw[black] (10,6) circle (2pt) node[right] {$\Q$};
\filldraw[black] (7,6) circle (2pt) node[left] {$\Q^2$};
\filldraw[black] (13,6) circle (2pt) node[right] {$\Q^2$};
\filldraw[black] (10,9) circle (2pt) node[above] {$\Q^3$};
\draw[ultra thick, <-](10,0.2)--(10,2.8);
\draw[ultra thick, <-](9.8,0.2)--(7.2,2.8);
\draw[ultra thick, <-](10.2,0.2)--(12.8,2.8);
\draw[ultra thick, <-](10,3.2)--(10,5.8);
\draw[ultra thick, <-](7,3.2)--(7,5.8);
\draw[ultra thick, <-](13,3.2)--(13,5.8);
\draw[ultra thick, <-](9.8, 3.2)--(7.2, 5.8);
\draw[ultra thick, <-](10.2,3.2)--(12.8,5.8);
\draw[ultra thick, <-](7.2,6.2)--(9.8,8.8);
\draw[ultra thick, <-](12.8,6.2)--(10.2,8.8);
\draw[ultra thick, <-](10,6.2)--(10,8.8);
\draw [ultra thick, ->,red] (-3,3.2) to [out=10,in=170] (7,3.2);
\node[red] at (2,4.2) {$f_1$};
\end{tikzpicture}
\caption{$\Hom_{\cg^{\Q}}(\underline{H_2}(Z(K;(D^1, S^0)), \underline{I_1})$} 
\label{D_8H_2I1}
\end{figure}
Hence $$\Hom_{\cg^{\Q}}(\underline{H_2}(Z(K;(D^1, S^0)), \underline{I_1})=\Q.$$
Moreover, we have $$\Hom_{\cg^{\Q}}(\underline{H_3}(Z(K;(D^1, S^0)), \underline{I_1})=0.$$
because there is no possible nonzero maps.
\subsubsection{\underline{$q=2$ row}}
The map between $\underline{H_0}(Z(K;(D^1,S^0)))$ and $\underline{I_2}$ is given in Figure \ref{D_8H_0I2}.

\begin{figure}[h]
\centering
\begin{tikzpicture}[scale=0.7]
\filldraw[black] (0,0) circle (2pt) node[below] {$\Q$};
\filldraw[black] (0,3) circle (2pt) node[below right] {$\Q$};
\filldraw[black] (-3,3) circle (2pt) node[left] {$\Q$};
\filldraw[black] (3,3) circle (2pt) node[right] {$\Q$};
\filldraw[black] (0,6) circle (2pt) node[right] {$\Q^2$};
\filldraw[black] (-3,6) circle (2pt) node[left] {$\Q$};
\filldraw[black] (3,6) circle (2pt) node[right] {$\Q^2$};
\filldraw[black] (0,9) circle (2pt) node[above] {$\Q^2$};
\draw[ultra thick, <-](0,0.2)--(0,2.8);
\draw[ultra thick, <-](-0.2,0.2)--(-2.8,2.8);
\draw[ultra thick, <-](0.2,0.2)--(2.8,2.8);
\draw[ultra thick, <-](0,3.2)--(0,5.8);
\draw[ultra thick, <-](-3,3.2)--(-3,5.8);
\draw[ultra thick, <-](3,3.2)--(3,5.8);
\draw[ultra thick, <-](-0.2, 3.2)--(-2.8, 5.8);
\draw[ultra thick, <-](0.2,3.2)--(2.8,5.8);
\draw[ultra thick, <-](-2.8,6.2)--(-0.2,8.8);
\draw[ultra thick, <-](2.8,6.2)--(0.2,8.8);
\draw[ultra thick, <-](0,6.2)--(0,8.8);
\filldraw[black] (10,0) circle (2pt) node[below] {$0$};
\filldraw[black] (10,3) circle (2pt) node[below right] {$0$};
\filldraw[black] (7,3) circle (2pt) node[left] {$0$};
\filldraw[black] (13,3) circle (2pt) node[right] {$0$};
\filldraw[black] (10,6) circle (2pt) node[right] {$0$};
\filldraw[black] (7,6) circle (2pt) node[left] {$\Q$};
\filldraw[black] (13,6) circle (2pt) node[right] {$\Q$};
\filldraw[black] (10,9) circle (2pt) node[above] {$\Q^2$};
\draw[ultra thick, <-](10,0.2)--(10,2.8);
\draw[ultra thick, <-](9.8,0.2)--(7.2,2.8);
\draw[ultra thick, <-](10.2,0.2)--(12.8,2.8);
\draw[ultra thick, <-](10,3.2)--(10,5.8);
\draw[ultra thick, <-](7,3.2)--(7,5.8);
\draw[ultra thick, <-](13,3.2)--(13,5.8);
\draw[ultra thick, <-](9.8, 3.2)--(7.2, 5.8);
\draw[ultra thick, <-](10.2,3.2)--(12.8,5.8);
\draw[ultra thick, <-](7.2,6.2)--(9.8,8.8);
\draw[ultra thick, <-](12.8,6.2)--(10.2,8.8);
\draw[ultra thick, <-](10,6.2)--(10,8.8);
\draw [ultra thick, ->,red] (0,9.2) to [out=10,in=170] (10,9.2);
\draw [ultra thick, ->,red] (-3,6.2) to [out=10,in=170] (7,6.2);
\draw [ultra thick, ->,red] (3,6.2) to [out=10,in=170] (13,6.2);
\node[red] at (1,7.2) {$f_4$};
\node[red] at (9,7.2) {$f_6$};
\node[red] at (5,9.3) {$f_7$};
\end{tikzpicture}
\caption{$\Hom_{\cg^{\Q}}(\underline{H_0}(Z(K;(D^1, S^0)), \underline{I_2})$} 
\label{D_8H_0I2}
\end{figure}
\begin{enumerate}
\item
Firstly we assume that $f_4=r$ and $f_6=(s,t)$.
\item
From the following two commutative diagrams
\begin{figure}[H]
\begin{tikzpicture}[scale=1]
\node[] at (0,0) {$\Q$};
\node[] at (0,2) {$\Q^2$};
\node[] at (2,0) {$\Q$};
\node[] at (2,2) {$\Q^2$};
\node[above] at (1,2) {$f_7$};
\node[right] at (2,1) {$(1,0)$};
\node[above] at (1,0) {$r$};
\node[left] at (0,1) {$(1,-1)$};
\draw[ultra thick, ->](0,1.7)--(0,0.3);
\draw[ultra thick, ->](2,1.7)--(2,0.3);
\draw[ultra thick, ->](0.3,0)--(1.7,0);
\draw[ultra thick, ->](0.3,2)--(1.7,2);
\end{tikzpicture}
\begin{tikzpicture}[scale=1]
\node[] at (0,0) {$\Q^2$};
\node[] at (0,2) {$\Q^2$};
\node[] at (2,0) {$\Q$};
\node[] at (2,2) {$\Q^2$};
\node[above] at (1,2) {$f_7$};
\node[right] at (2,1) {$(0,1)$};
\node[above] at (1,0) {$(s,t)$};
\node[left] at (0,1) {Id};
\draw[ultra thick, ->](0,1.7)--(0,0.3);
\draw[ultra thick, ->](2,1.7)--(2,0.3);
\draw[ultra thick, ->](0.3,0)--(1.7,0);
\draw[ultra thick, ->](0.3,2)--(1.7,2);
\end{tikzpicture}
\end{figure}
We have $$f_7=\left(\begin{array}{cc}
r& -r\\
s &t\\
\end{array}\right).$$
\item
In summary, $$\Hom_{\cg^{\Q}}(\underline{H_0}(Z(K;(D^1, S^0)), \underline{I_2})=\Q^3.$$
\end{enumerate}
The map between $\underline{H_1}(Z(K;(D^1,S^0)))$ and $\underline{I_2}$ is given in Figure \ref{D_8H_1I2}.

\begin{figure}[h]
\centering
\begin{tikzpicture}[scale=0.7]
\filldraw[black] (0,0) circle (2pt) node[below] {$0$};
\filldraw[black] (0,3) circle (2pt) node[below right] {$\Q$};
\filldraw[black] (-3,3) circle (2pt) node[left] {$0$};
\filldraw[black] (3,3) circle (2pt) node[right] {$\Q$};
\filldraw[black] (0,6) circle (2pt) node[right] {$0$};
\filldraw[black] (-3,6) circle (2pt) node[left] {$\Q$};
\filldraw[black] (3,6) circle (2pt) node[right] {$0$};
\filldraw[black] (0,9) circle (2pt) node[above] {$0$};
\draw[ultra thick, <-](0,0.2)--(0,2.8);
\draw[ultra thick, <-](-0.2,0.2)--(-2.8,2.8);
\draw[ultra thick, <-](0.2,0.2)--(2.8,2.8);
\draw[ultra thick, <-](0,3.2)--(0,5.8);
\draw[ultra thick, <-](-3,3.2)--(-3,5.8);
\draw[ultra thick, <-](3,3.2)--(3,5.8);
\draw[ultra thick, <-](-0.2, 3.2)--(-2.8, 5.8);
\draw[ultra thick, <-](0.2,3.2)--(2.8,5.8);
\draw[ultra thick, <-](-2.8,6.2)--(-0.2,8.8);
\draw[ultra thick, <-](2.8,6.2)--(0.2,8.8);
\draw[ultra thick, <-](0,6.2)--(0,8.8);
\filldraw[black] (10,0) circle (2pt) node[below] {$0$};
\filldraw[black] (10,3) circle (2pt) node[below right] {$0$};
\filldraw[black] (7,3) circle (2pt) node[left] {$0$};
\filldraw[black] (13,3) circle (2pt) node[right] {$0$};
\filldraw[black] (10,6) circle (2pt) node[right] {$0$};
\filldraw[black] (7,6) circle (2pt) node[left] {$\Q$};
\filldraw[black] (13,6) circle (2pt) node[right] {$\Q$};
\filldraw[black] (10,9) circle (2pt) node[above] {$\Q^2$};
\draw[ultra thick, <-](10,0.2)--(10,2.8);
\draw[ultra thick, <-](9.8,0.2)--(7.2,2.8);
\draw[ultra thick, <-](10.2,0.2)--(12.8,2.8);
\draw[ultra thick, <-](10,3.2)--(10,5.8);
\draw[ultra thick, <-](7,3.2)--(7,5.8);
\draw[ultra thick, <-](13,3.2)--(13,5.8);
\draw[ultra thick, <-](9.8, 3.2)--(7.2, 5.8);
\draw[ultra thick, <-](10.2,3.2)--(12.8,5.8);
\draw[ultra thick, <-](7.2,6.2)--(9.8,8.8);
\draw[ultra thick, <-](12.8,6.2)--(10.2,8.8);
\draw[ultra thick, <-](10,6.2)--(10,8.8);
\draw [ultra thick, ->,red] (-3,6.2) to [out=10,in=170] (7,6.2);
\node[red] at (1,7.2) {$f_4$};
\end{tikzpicture}
\caption{$\Hom_{\cg^{\Q}}(\underline{H_1}(Z(K;(D^1, S^0)), \underline{I_2})$} 
\label{D_8H_1I2}
\end{figure}
Hence $$\Hom_{\cg^{\Q}}(\underline{H_1}(Z(K;(D^1, S^0)), \underline{I_2})=\Q.$$
Finally we have $$\Hom_{\cg^{\Q}}(\underline{H_2}(Z(K;(D^1, S^0)), \underline{I_2})= \Hom_{\cg^{\Q}}(\underline{H_3}(Z(K;(D^1, S^0)), \underline{I_2})=0.$$
because there is no possible nonzero maps.
Combine the computation in this section, the $E_2$-page of the universal spectral sequence 
$$E_2^{p,q}=\Ext_{\cg^R}^{p,q}(\underline{H_*}(Z(K, (D^1, S^0))), \underline{M})$$
is given by the follow diagram. We only write the dimension for each $\Q$-vector spaces.
\begin{figure}[H]
\centering
\begin{tikzpicture}[scale=0.7]
\node[] at (1,1) {2};
\node[] at (2,1) {0};
\node[] at (3,1) {0};
\node[] at (4,1) {1};
\node[] at (1,2) {3};
\node[] at (2,2) {2};
\node[] at (3,2) {1};
\node[] at (4,2) {0};
\node[] at (1,3) {3};
\node[] at (2,3) {1};
\node[] at (3,3) {0};
\node[] at (4,3) {0};
\draw[ultra thick, -](-1,0)--(5,0);
\draw[ultra thick, -](0,-1)--(0,4);
\end{tikzpicture}
\caption{$\Ext_{\cg^R}^{p,q}(\underline{H_*}(Z(K, (D^1, S^0))), \underline{M})$} 
\label{E_2page}
\end{figure}

Notice that there are two possible $d_2$ differentials and one possible $d_3$ differential. In addition, we have $$H^1_{G}(Z(K;(D^1, S^0)),\underline{M})=Q^3\neq H^1_{G}(Z(K;(D^1, S^0)),\underline{\Q}).$$
And $$\dim_{\Q}H^2_{G}(Z(K;(D^1, S^0)),\underline{M})\geq 3$$ Hence $$H^2_{G}(Z(K;(D^1, S^0)),\underline{M})\neq H^2_{G}(Z(K;(D^1, S^0)),\underline{\Q}). $$
\nocite{*}
\bibliographystyle{abbrv}

\bibliography{BCEH}

\end{document}